\providecommand{\keywords}[1]
{
  \noindent \small	
  \textbf{Keywords:} #1
}
\providecommand{\amscode}[1]
{
  \noindent \small	
  \textbf{AMS subject classifications:} #1
}
\newtheorem{theorem}{Theorem}[section]
\newtheorem{lemma}[theorem]{Lemma}
\newtheorem{definition}[theorem]{Definition}
\newtheorem{example}[theorem]{Example}
\newtheorem{remark}[theorem]{Remark}
\newcommand{\bsh}{\boldsymbol{h}}
\newcommand{\bsk}{\boldsymbol{k}}
\newcommand{\bsl}{\boldsymbol{l}}
\newcommand{\bsell}{\boldsymbol{\ell}}
\newcommand{\bsw}{\boldsymbol{w}}
\newcommand{\bsx}{\boldsymbol{x}}
\newcommand{\bsy}{\boldsymbol{y}}
\newcommand{\bszero}{\boldsymbol{0}}
\newcommand{\bsone}{\boldsymbol{1}}
\newcommand{\Ocal}{\mathcal{O}}
\newcommand{\Ucal}{\mathcal{U}}
\newcommand{\bN}{\mathbb{N}}
\newcommand{\bR}{\mathbb{R}}
\newcommand{\NN}{\mathbb{N}}
\newcommand{\Zb}{\mathbb{Z}_b}
\newcommand{\Fb}{\mathbb{F}_b}
\newcommand{\wal}{\mathrm{wal}}
\newcommand{\floor}[1]{\lfloor #1 \rfloor}
\DeclareMathOperator{\Ker}{Ker}
\DeclareMathOperator{\Image}{Image}
\DeclareMathOperator{\rank}{rank}
\DeclareMathOperator{\proj}{pr}
\begin{document}

\title{
Improved bounds on the gain coefficients for digital nets in prime power base
}

\author{
Takashi Goda\thanks{School of Engineering, University of Tokyo, 7-3-1 Hongo, Bunkyo-ku, Tokyo 113-8656, Japan ({\tt goda@frcer.t.u-tokyo.ac.jp})}, Kosuke Suzuki\thanks{Graduate School of Advanced Science and Engineering, Hiroshima University, 1-3-1 Kagamiyama, Higashi-Hiroshima City, Hiroshima 739-8526, Japan}
}

\date{\today}
\maketitle

\begin{abstract}
We study randomized quasi-Monte Carlo integration by scrambled nets. The scrambled net quadrature has long gained its popularity because it is an unbiased estimator of the true integral, allows for a practical error estimation, achieves a high order decay of the variance for smooth functions, and works even for $L^p$-functions with any $p\geq 1$. The variance of the scrambled net quadrature for $L^2$-functions can be evaluated through the set of the so-called \emph{gain coefficients.}

In this paper, based on the system of Walsh functions and the concept of dual nets, we provide improved upper bounds on the gain coefficients for digital nets in general prime power base. Our results explain the known bound by Owen (1997) for Faure sequences, the recently improved bound by Pan and Owen (2021) for digital nets in base 2 (including Sobol' sequences as a special case), and their finding that all the nonzero gain coefficients for digital nets in base 2 must be powers of two, all in a unified way.
\end{abstract}
\keywords{Quasi-Monte Carlo, scrambling, digital net, duality theory, Walsh functions}

\amscode{Primary 65C05, 65D30; Secondary 42C10, 43A40}
\section{Introduction}\label{sec:intro}
We study numerical integration of functions defined over the unit cube $[0,1)^s$ for a dimension $s$. For a Riemann integrable function $f: [0,1)^s\to \bR$, we denote the integral of $f$ by
\[ I(f) := \int_{[0,1)^s}f(\bsx)\, \mathrm{d}\bsx. \]
In this paper we consider an equally weighted quadrature to approximate $I(f)$:
\[ I(f; P_N) = \frac{1}{N}\sum_{n=0}^{N-1}f(\bsy_n),\]
for an $N$-element point set $P_N=\{\bsy_0,\dots,\bsy_{N-1}\}\subseteq [0,1)^s$. If we choose each $\bsy_n$ independently and randomly from the uniform distribution $\Ucal[0,1)^s$, this is nothing but the standard, unbiased Monte Carlo estimator and the variance of the estimator is given by $\sigma^2/N$ with $\sigma^2=I(f^2)-(I(f))^2$ for any function $f\in L^2([0,1)^s)$.

Randomized quasi-Monte Carlo (RQMC) methods are aimed at attaining an improved convergence rate of the estimator's variance instead of reducing the variance of functions $\sigma^2$ \cite{LE18}. The key ingredient of RQMC methods is that the point set $P_N$ is randomly distributed so that every point $\bsy_n$ follows $\Ucal[0,1)^s$ individually, but collectively the points $\bsy_0,\dots,\bsy_{N-1}$ are well equi-distributed over $[0,1)^s$. This way the resulting RQMC estimator not only becomes unbiased, but also allows for a practical error estimation as described in \cite[Section~2]{DKS13}. 

Among many others, the scrambled net quadrature due to Owen \cite{Ow95} has long gained its popularity as an RQMC method because of its additional desirable properties that \textbf{(1)} the variance of the estimator converges at a faster rate of $\Ocal(N^{-3+\varepsilon})$ with arbitrarily small $\varepsilon>0$ for smooth functions \cite{Ow97b,Ow98}, and that \textbf{(2)} it works even for $L_p$-functions with any $p\geq 1$ \cite{OR21,HR22}. The original variance analysis for the scrambled net quadrature was made in \cite{Ow97} by using the system of Haar functions, whereas, later in the book by Dick and Pillichshammer \cite{DP10}, it was shown that similar variance analysis is possible even by replacing it with the system of Walsh functions. In fact, this point is emphasized in the preface of \cite{DP10} as:
\begin{quote}
\textit{In the analysis of scrambled nets, no disadvantage seems to arise from replacing Haar functions with Walsh functions. The locality of Haar functions is offset by the locality of the Walsh-Dirichlet kernel. $\dots$ This makes Walsh functions more suitable for our endeavour than Haar functions.}
\end{quote}
We take the same standpoint and employ the system of Walsh functions in this paper.

Throughout this paper, we use the following notation. Let $\NN$ be the set of positive integers and $\NN_0 := \NN \cup \{0\}$. Given a prime power $b$, we denote the system of $b$-adic Walsh functions by $\{\wal_{\bsk}\}_{\bsk\in \NN_0^s}$. We refer to Subsection~\ref{subsec:walsh} for its definition. Noting that the Walsh system forms a complete orthonormal $L^2([0,1)^s)$ basis, any function $f\in L^2([0,1)^s)$ has its Walsh series
\[ f(\bsx)\sim \sum_{\bsk\in \NN_0^s}\hat{f}(\bsk)\wal_{\bsk}(\bsx),\]
where we denote by $f\sim g$ the equivalence relation for the $L^2([0,1)^s)$ space, i.e., $I((f-g)^2)=0$. Here, $\hat{f}(\bsk)$ denotes the $\bsk$-th Walsh coefficient of $f$:
\[ \hat{f}(\bsk):=\int_{[0,1)^s}f(\bsx)\overline{\wal_{\bsk}(\bsx)}\, \mathrm{d}\bsx.\]

For any underlying $N$-element point set $\{\bsx_0,\ldots,\bsx_{N-1}\}\subseteq [0,1)^s$ to which Owen's scrambling is applied, the scrambled net variance is given by
\[ \frac{1}{N}\sum_{\emptyset \neq u\subseteq \{1,\ldots,s\}}\sum_{\bsk\in \NN_0^{|u|}}\Gamma_{u,\bsk}\sigma_{u,\bsk}^2,\]
see \cite[Theorem~13.6]{DP10}, where $\{\Gamma_{u,\bsk}\}_{u,\bsk}$ denotes the set of the so-called \emph{gain coefficients}, the non-negative values depending on the underlying points $\bsx_0,\ldots,\bsx_{N-1}$ but not on a function $f$, and we write 
\[ \sigma_{u,\bsk}^2 = \sum_{\substack{ b^{k_j}\leq \ell_j<b^{k_j+1}\\ j\in u}}|\hat{f}(\bsell_u,\bszero)|^2, \]
where we denote by $(\bsell_u,\bszero)$ the vector $\bsh\in \NN_0^s$ such that $h_j=\ell_j$ if $j\in u$ and $h_j=0$ otherwise. This is why the set of the gain coefficients plays an important role in evaluating the effectiveness of the scrambled net quadrature and is the subject of this paper. 

In passing, it follows from  Parseval's identity that the variance of the standard Monte Carlo estimator can be expressed by
\[ \frac{\sigma^2}{N} = \frac{1}{N}\sum_{\emptyset \neq u\subseteq \{1,\ldots,s\}}\sum_{\bsk\in \NN_0^{|u|}}\sigma_{u,\bsk}^2.\]
As we can see, the scrambled net variance is bounded above by
\[ \frac{\Gamma \sigma^2}{N}\quad \text{with}\quad \Gamma := \max_{\emptyset \neq u\subseteq \{1,\ldots,s\}}\max_{\bsk\in \NN_0^{|u|}}\Gamma_{u,\bsk},\]
so that the scrambled net quadrature with $N$ points performs no wore than the standard Monte Carlo estimator with $\lfloor N/\Gamma\rfloor$ points for any function $f\in L^2([0,1)^s)$ in term of variance. For smooth functions, we can exploit the decay of $\sigma_{u,\bsk}^2$ to show that the scrambled net variance with some good underlying points decays much faster than $\Ocal(1/N)$ \cite{Ow97,Ow97b,Ow98,YM99,DP10}.

For special types of the underlying point sets, called $(t,m,s)$-nets and digital nets, there are some upper bounds on the gain coefficients known in the literature. Regarding the definitions of these special point sets, we defer to the next section. It was first shown in \cite[Theorem~3]{Ow97} that $(0,m,s)$-nets in base $b$ satisfy a uniform bound $\Gamma_{u,\bsk}\leq (b/(b-1))^{s-1}$ for all $u$ and $\bsk$. The first $b^m$ points of Faure's digital sequences \cite{Fa82} is a $(0,m,s)$-net in which the base $b$ needs to be a prime power larger than or equal to the dimension $s$, so that we obtain $\Gamma_{u,\bsk}\leq (b/(b-1))^{b-1}\leq e$. For more general $(t,m,s)$-nets in base $b$, \cite[Lemmas~2--4]{Ow98} proved a bound
\[ \Gamma_{u,\bsk}\leq \begin{cases} 0 & \text{if $|\bsk|\leq m-t-|u|,$} \\ \displaystyle b^t\left(\frac{b+1}{b-1}\right)^{|u|} & \text{if $m-t-|u|<|\bsk|<m-t ,$}\\ \displaystyle b^t\frac{b^{|u|}+(b-2)^{|u|}}{2(b-1)^{|u|}} & \text{if $|\bsk|\geq m-t,$}\end{cases}\]
where we write $|\bsk|=\sum_{j\in u}k_j$. For the case $|\bsk|\geq m-t$, this result was improved in \cite[Proposition~5.1]{NP01} by analyzing the microstructure of $(t,m,s)$-nets. In \cite[Lemma~6 and Theorem~7]{YH02}, Yue and Hickernell considered digital nets in prime power base $b$ and gave exact formulas of the gain coefficients in terms of generating matrices. In a recent work by Pan and Owen \cite{PO21}, essentially the same formula was derived in Theorem~1 for digital $(t,m,s)$-nets in base 2, and then a bound $\Gamma_{u,\bsk}\leq 2^{t+|u|-1}$ for any $u$ and $\bsk$ was proven in Corollary~3. Moreover, an interesting finding in \cite{PO21} is that $\Gamma_{u,\bsk}$ for digital nets in base 2 must be 0 or powers of two, as shown in \cite[Theorem~2]{PO21}.

The aim of this paper is to generalize many of the results for digital nets in base 2 by Pan and Owen \cite{PO21} and to provide improved upper bounds on the gain coefficients for digital nets in general prime power base $b$. In fact, our obtained bounds can be regarded as a straightforward consequence of the results shown in \cite[Chapter~13]{DP10} where the scrambled net variance is analyzed based on the system of Walsh functions. However, by investigating the gain coefficients in more detail through the lens of duality theory for digital nets originally developed by Niederreiter and Pirsic \cite{NP01b} and Skriganov \cite{Sk01}, see also \cite[Chapter~7]{DP10}, we can provide a unified result, explaining how special the recent finding that the non-zero $\Gamma_{u,\bsk}$ for digital nets in base 2 must be powers of two is. 

The rest of this paper is organized as follows. In the next section, we introduce necessary definitions such as $(t,m,s)$-nets, digital nets, dual nets, and Walsh functions. In Section~\ref{sec:bound}, we show improved upper bounds on the gain coefficients via an argument relying on the system of Walsh functions and the concept of dual nets. Although our main result in Theorem~\ref{thm:gain_bound} can be seen as a straightforward extension of the results in \cite[Chapter~13]{DP10}, we give yet another proof of the key Lemma~\ref{lem:bound_A}, by which it enables us to understand that the result in \cite{PO21} that the non-zero gain coefficients must be powers of two is special for digital nets in base 2. That is, for a prime power base $b>2$, the non-zero gain coefficients are not necessarily powers of $b$. Finally in Section~\ref{sec:maximal}, we discuss reduced upper bounds on the maximal gain coefficient and its related quantities, which apply to digital nets in general prime power base.

\section{Preliminaries}

\paragraph{Notation.}
Again, let $\NN$ be the set of positive integers and $\NN_0 := \NN \cup \{0\}$. Let $b$ be a prime power, $\Zb = \{0,1,\dots, b-1\}$ be the residue ring modulo $b$, and $\Fb$ be the $b$-element finite field. Let $\varphi: \Zb\to \Fb$ be a bijection with $\varphi(0)=0$. For a vector $\bsk = (k_1,\dots,k_s) \in \NN_0^s$, define $|\bsk| := \sum_{j=1}^s k_j$. For two vectors $\bsk,\bsk'$ with the same dimensionality, the notation $\bsk\ge \bsk'$ denotes the componentwise inequality. Moreover we write $1{:}s = \{1,2,\dots,s\}$.

\subsection{\texorpdfstring{$(t,m,s)$}{(t,m,s)}-nets and digital nets}\label{subsec:digital_net}

\begin{definition}[$(t,m,s)$-nets]
For $m\in \NN_0$, let $P \subseteq [0,1)^s$ be a set of $b^m$ points. We call $P$ a $(t,m,s)$-net in base $b$ if there exists a $t\in \NN_0$ such that every interval of the form
\[
\prod_{j=1}^s \left[\frac{a_j}{b^{c_j}}, \frac{a_j+1}{b^{c_j}}\right) \quad \text{with $c_j \in \NN_0$, $0 \le a_j < b^{c_j}$ and $\sum_{j=1}^s c_j = m-t$}
\]
contains exactly $b^t$ points of $P$. A $(t,m,s)$-net in base $b$ is called a strict $(t,m,s)$-net in base $b$, if it is not a $(t-1,m,s)$-net in base $b$.
\end{definition}

By definition, $(t,m,s)$-nets with smaller $t$-value are more equi-distributed, as they satisfy the same condition with respect to finer partitions of $[0,1)^s$.

\begin{definition}[digital nets]\label{def:digital_net}
For $m,n\in \NN$, let $C_1,\ldots,C_s$ be $n\times m$ matrices over $\Fb$. For an integer $0\leq h<b^m$ with $b$-adic expansion $h = \eta_0 + \eta_1 b+\cdots + \eta_{m-1}b^{m-1}$, define the point $\bsx_h=(x_{h,1},\ldots,x_{h,s})\in [0,1)^s$ by
\[ x_{h,j} = \frac{\varphi^{-1}(\xi_{1,h,j})}{b}+\frac{\varphi^{-1}(\xi_{2,h,j})}{b^2}+\cdots + \frac{\varphi^{-1}(\xi_{n,h,j})}{b^n}, \]
where
\[ (\xi_{1,h,j},\xi_{2,h,j},\ldots, \xi_{n,h,j})^\top = C_j \cdot (\varphi(\eta_0),\varphi(\eta_1),\ldots,\varphi(\eta_{m-1}))^{\top}. \]
We call $P=\{ \bsx_h \mid 0\leq h<b^m\}\subseteq [0,1)^s$ a digital net in base $b$. If $P$ is a (strict) $(t,m,s)$-net in base $b$, we call $P$ a (strict) digital $(t,m,s)$-net in base $b$.
\end{definition}

The strict $t$-value of digital nets is computable from their generating matrices $C_1,\ldots,C_s$.
We first define matrices which are collections of row vectors of $C_1,\ldots,C_s$.

\begin{definition}
For $m,n\in \NN$ with $n\geq m$, let $C_1,\ldots,C_s$ be $n\times m$ matrices over $\Fb$. Let $u = \{r_1, \dots, r_{|u|}\} \subseteq 1{:}s$ and $\bsk \in \NN_0^{|u|}$.
We define $C_{u,\bsk} \in \Fb^{|\bsk|\times m}$
by
\[
C_{u,\bsk} :=
\begin{pmatrix}
\text{the first $k_1$ row vectors of $C_{r_1}$}\\
\text{the first $k_2$ row vectors of $C_{r_2}$}\\
\vdots\\
\text{the first $k_{|u|}$ row vectors of $C_{r_{|u|}}$}\\
\end{pmatrix}.
\]
\end{definition}
\noindent
Then it holds that the strict $t$-value of a digital net is equal to
\begin{equation}\label{eq:tvalue_rank}
t = m+1 - \min_{\emptyset \neq u\subseteq 1{:}s}\min_{\bsk \in \NN_0^{|u|}} \left\{|\bsk| \mid \text{$C_{u,\bsk}$ is not full row rank}\right\}.
\end{equation}
We refer to \cite[Theorem~4.52]{DP10} for the proof.

The concept of the dual net for a digital net $P$, defined below, plays a crucial role in our subsequent analysis of the gain coefficients. 
\begin{definition}[Dual nets]\label{def:dual}
For $m,n\in \NN$, let $P$ be a digital net over $\Fb$ with generating matrices $C_1,\ldots,C_s\in \Fb^{n\times m}$. The dual net of $P$, denoted by $P^{\perp}$, is defined by
\[ P^{\perp} := \left\{ \bsk=(k_1,\ldots,k_s)\in \NN_0^s \mid C_1^{\top}\nu_n(k_1) + \cdots + C_s^{\top}\nu_n(k_s)=\bszero \in \Fb^m \right\}, \]
with
\[ \nu_n(k)=(\varphi(\kappa_0),\ldots,\varphi(\kappa_{n-1}))^{\top}\in \Fb^n \]
for $k\in \NN_0$ with $b$-adic expansion $k=\kappa_0+\kappa_1 b+\cdots$, where all but a finite number of $\kappa_i$ are 0.
\end{definition}

\subsection{Walsh functions}\label{subsec:walsh}
In what follows, let $\omega_b$ denote the primitive $b$-th root of unity $\exp(2\pi \sqrt{-1}/b)$. The univariate Walsh functions are defined as follows.
\begin{definition}
Let $k\in \NN_0$ with $b$-adic expansion $k=\kappa_0+\kappa_1 b+\cdots$, where all but a finite number of $\kappa_i$ are 0. The $k$-th $b$-adic Walsh function $\wal_k\colon [0,1)\to \{1,\omega_b,\ldots,\omega_b^{b-1}\}$ is defined by
\[ \wal_k (x) := \omega_b^{\varphi^{-1}(\varphi(\kappa_0)\varphi(\xi_1))+\varphi^{-1}(\varphi(\kappa_1)\varphi(\xi_2))+\cdots},\]
where the $b$-adic expansion of $x\in [0,1)$ is denoted by $x=\xi_1 b^{-1}+\xi_2 b^{-2}+\cdots$, which is unique in the sense that infinitely many of the digits $\xi_i$ are different from $b-1$.
\end{definition}
\noindent For the multivariate case, the Walsh functions are defined as follows.
\begin{definition}
For $s\geq 1$ and $\bsk=(k_1,\ldots,k_s)\in \NN_0^s$, the $\bsk$-th $b$-adic Walsh function $\wal_{\bsk}\colon [0,1)^s\to \{1,\omega_b,\ldots,\omega_b^{b-1}\}$ is defined by
\[ \wal_{\bsk} (\bsx) := \prod_{j=1}^{s}\wal_{k_j}(x_j). \]
\end{definition}

Here we compute the sum of the first $b^k$ Walsh functions, called the Walsh-Dirichlet kernel \cite[Appendix~A]{DP10}, for later use.
For $s=1$, by denoting the $b$-adic expansion of $l\in \{0,\ldots,b^k-1\}$ by $l=\ell_0+\ell_1b+\cdots+\ell_{k-1}b^{k-1}$ and using the fact that $\varphi$ is a bijection with $\varphi(0)=0$, we have
\[ \sum_{l=0}^{b^k-1} \wal_l(x)
= \prod_{i=0}^{k-1} \sum_{\ell_i=0}^{b-1} \omega_b^{\varphi^{-1}(\varphi(\ell_i) \varphi(\xi_{i+1}))}
= \prod_{i=0}^{k-1} b \chi(\xi_{i+1} = 0)
= b^{k} \chi(\floor{b^kx}=0),\]
where $\chi(A)$ denotes the indicator function of an event $A$. For $s>1$, this equality can be simply generalized as
\begin{equation}\label{eq:sum_in_V}
\sum_{\substack{\bsl \in \NN_0^s\\ l_j<b^{k_j},\, j\in u\\ l_j=0,\, j\not\in u}} \wal_{\bsl}(\bsx)
= \prod_{j \in u} \sum_{l_j = 0}^{b^{k_j}-1} \wal_{l_j}(x_j)
= \prod_{j \in u} b^{k_j}\chi(\floor{b^{k_j} x_j}=0)
= b^{|\bsk|}\prod_{j \in u} \chi(\floor{b^{k_j} x_j}=0),
\end{equation}
for any $u \subseteq 1{:}s$ and $\bsk \in \NN_0^{|u|}$.

The following character property is also important in the subsequent analysis. We refer to \cite[Lemma~4.75]{DP10} for the proof.
\begin{lemma}\label{lem:dual-walsh}
Let $P\subseteq [0,1)^s$ be a digital net in base $b$. For any $\bsk\in \NN_0^s$ we have
\[ \sum_{\bsx\in P}\wal_{\bsk}(\bsx) = \begin{cases} |P| & \text{if $\bsk\in P^{\perp}$,} \\ 0 & \text{otherwise,}\end{cases} \]
where $P^{\perp}$ denotes the dual net of $P$ as defined in Definition~\ref{def:dual}.
\end{lemma}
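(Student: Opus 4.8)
The plan is to expand the left-hand side over the explicit digital description of $P$ and to recognise $\wal_{\bsk}$, restricted to $P$, as a single additive character of the finite abelian group $(\Fb^m,+)$ whose ``frequency'' is precisely the vector appearing in the definition of $P^{\perp}$; the statement is then the familiar orthogonality relation for characters. By Definition~\ref{def:digital_net}, the points of $P$ are $\bsx_h$ for $0\le h<b^m$, and the map $h\mapsto \nu_m(h):=(\varphi(\eta_0),\ldots,\varphi(\eta_{m-1}))^{\top}$ (the notation of Definition~\ref{def:dual} with $n$ replaced by $m$) is a bijection from $\{0,\ldots,b^m-1\}$ onto $\Fb^m$. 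Hence $\sum_{\bsx\in P}\wal_{\bsk}(\bsx)=\sum_{h=0}^{b^m-1}\prod_{j=1}^s\wal_{k_j}(x_{h,j})$, and it suffices to evaluate this as a sum over $\nu_m(h)\in\Fb^m$.

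Next I would unwind $\wal_{k_j}(x_{h,j})$. By Definition~\ref{def:digital_net} the $b$-adic digits of $x_{h,j}$ are $\varphi^{-1}\bigl((C_j\nu_m(h))_i\bigr)$ for $i=1,\ldots,n$ and vanish beyond position $n$, while $\nu_n(k_j)$ records the first $n$ base-$b$ digits of $k_j$. Substituting into the Walsh definition and noting that every summand of the exponent of index $\ge n$ carries the factor $\varphi(0)=0$, one finds that $\wal_{k_j}(x_{h,j})$ depends on $h$ only through $C_j\nu_m(h)\in\Fb^n$, paired with $\nu_n(k_j)$. Writing $\psi$ for the nontrivial additive character of $(\Fb,+)$ underlying the Walsh system (so $\psi(\cdot)=\omega_b^{(\cdot)}$ under the identification $\Fb=\Zb$ when $b$ is prime), this reads $\wal_{k_j}(x_{h,j})=\psi\bigl(\nu_n(k_j)^{\top}C_j\,\nu_m(h)\bigr)$. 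Taking the product over $j$ and using $\nu_n(k_j)^{\top}C_j=\bigl(C_j^{\top}\nu_n(k_j)\bigr)^{\top}$ together with additivity of $\psi$ gives
\[ \prod_{j=1}^s\wal_{k_j}(x_{h,j})=\psi\bigl(\bsw^{\top}\nu_m(h)\bigr),\qquad \bsw:=\sum_{j=1}^s C_j^{\top}\nu_n(k_j)\in\Fb^m. \]

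Finally I would invoke orthogonality: since $\nu_m(h)$ ranges over all of $\Fb^m$, we get $\sum_{\bsx\in P}\wal_{\bsk}(\bsx)=\sum_{\bsz\in\Fb^m}\psi(\bsw^{\top}\bsz)$. If $\bsw=\bszero$, which by Definition~\ref{def:dual} is exactly the condition $\bsk\in P^{\perp}$, every summand equals $1$ and the sum is $b^m=|P|$. If $\bsw\ne\bszero$, then $\bsz\mapsto\psi(\bsw^{\top}\bsz)$ is a nontrivial character of $(\Fb^m,+)$ --- the linear form $\bsz\mapsto\bsw^{\top}\bsz$ is onto $\Fb$ and $\psi$ is nontrivial --- so its values sum to $0$. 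This yields the claimed dichotomy.

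The one step that requires genuine care is the identification of $\wal_k$ with (a product of) values of a bona fide additive character $\psi$ of $(\Fb,+)$: for a proper prime power $b=p^{r}$ one must take, e.g., $\psi(c)=\exp\!\bigl(2\pi\sqrt{-1}\,\mathrm{Tr}_{\Fb/\Fp}(c)/p\bigr)$ rather than a naive exponential of base-$b$ digits, since $(\Fb,+)$ and $(\Zb,+)$ are not isomorphic when $r\ge 2$. Once this interpretation is fixed the remaining ingredients --- the transposition identity linking $\nu_n(k_j)^{\top}C_j\nu_m(h)$ to the defining quantity $\bsw$ of $P^{\perp}$, and the orthogonality of characters of a finite abelian group --- are routine, and for prime $b$ the subtlety disappears entirely.
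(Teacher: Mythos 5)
Your proof follows the standard character--orthogonality argument, which is the same route taken by the cited reference [DP10, Lemma~4.75] --- the paper itself only gives a citation, not a written-out proof. The decisive steps --- recognising $h\mapsto\wal_{\bsk}(\bsx_h)$ as an additive character $\psi(\bsw^{\top}\nu_m(h))$ of $(\Fb^m,+)$ with frequency $\bsw=\sum_{j}C_j^{\top}\nu_n(k_j)$, then invoking orthogonality and noting $\bsw=\bszero\iff\bsk\in P^{\perp}$ --- are exactly right, and your bookkeeping with the truncation $\nu_n(k_j)$ and the vanishing of the digits of $x_{h,j}$ beyond position $n$ is correct.

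The caveat you flag at the end is not a minor interpretive point; it is essential, and it exposes an imprecision in the paper's own Definition of $\wal_k$. Taken literally, $c\mapsto\omega_b^{\varphi^{-1}(c)}$ is a bijection of $\Fb$ onto the $b$-th roots of unity, but for a proper prime power $b=p^r$ with $r\geq 2$ it is \emph{never} an additive character of $(\Fb,+)$, since $(\Fb,+)\cong(\Fp^r,+)$ is not isomorphic to $(\Zb,+)$; so your identification $\wal_{k_j}(x_{h,j})=\psi\bigl(\nu_n(k_j)^{\top}C_j\nu_m(h)\bigr)$ simply does not hold for that $\psi$. Indeed the lemma itself fails under that literal reading: take $b=4$, $s=m=1$, $n=2$, $C_1=(1,1)^{\top}$, $k=5$. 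Then $\nu_2(5)=(1,1)^{\top}$ and $C_1^{\top}\nu_2(5)=1+1=0$ in $\Fb$, so $5\in P^{\perp}$, yet $x_h=5h/16$ has equal first two digits $h$, giving $\sum_{h=0}^{3}\wal_5(x_h)=\sum_{h=0}^{3}\omega_4^{2h}=0\neq 4$. (The same example shows the Walsh system is not orthonormal under the literal definition.) Replacing $\omega_b^{\varphi^{-1}(\,\cdot\,)}$ by a bona fide nontrivial additive character of $\Fb$ --- e.g.\ the trace character you name --- is precisely what [DP10] does, and with that reading your proof is complete. It would strengthen the writeup to state this as a correction of the definition rather than as an optional reinterpretation.
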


\section{Bounds on gain coefficients via a dual net approach}\label{sec:bound}
\subsection{Improved upper bounds}
Building upon \cite[Chapter~13]{DP10}, we give improved upper bounds on the gain coefficients. First let us give a formal definition of the gain coefficients introduced by Owen \cite{Ow97}.

\begin{definition}\label{def:gain}
Let $P_N=\{\bsx_0,\ldots,\bsx_{N-1}\}\subseteq [0,1)^s$ be an $N$-element point set. For a non-empty $u \subseteq 1{:}s$ and $\bsk=(k_j)_{j\in u}\in \bN_0^{|u|}$, we define the gain coefficient of $P_N$ by
\[
\Gamma_{u,\bsk}
:= \frac{1}{N} \sum_{i,i'=0}^{N-1}\prod_{j \in u}\frac{b \chi(\floor{b^{k_j+1}x_{i,j}}=\floor{b^{k_j+1}x_{i',j}})
- \chi(\floor{b^{k_j}x_{i,j}}=\floor{b^{k_j}x_{i',j}})}{b-1}.
\]
\end{definition}
\noindent
We note that this definition applies to any finite point set, and that our $\Gamma_{u,\bsk}$ is exactly the same with
the quantity $G_{\bsk + \bsone_u}$ used in \cite[Section~13.3]{DP10}, where $\bsk+\bsone_u=(k_j+1)_{j\in u}$.

As preparation, let us introduce some subsets of $\NN_0^s$.
\begin{definition}\label{def:VandA}
For $u \subseteq 1{:}s$ and $\bsk \in \NN_0^{|u|}$, we define the sets $V(u,\bsk)$ and $A(u,\bsk)$ by
\begin{align*}
V(u,\bsk) &:= \{\bsl \in \NN_0^s \mid \text{$l_j < b^{k_j}$ if $j \in u$ and $l_j = 0$ otherwise} \}\quad \text{and}\\
A(u,\bsk) &:= \{\bsl \in \NN_0^s \mid \text{$b^{k_j - 1} \le l_j < b^{k_j}$ if $j \in u$ and $l_j = 0$ otherwise} \},
\end{align*}
respectively.
\end{definition}
\noindent Note that the equality \eqref{eq:sum_in_V} on the Walsh-Dirichlet kernel is equivalent to
\[ \sum_{\bsl \in V(u,\bsk)} \wal_{\bsl}(\bsx)
= b^{|\bsk|}\prod_{j \in u} \chi(\floor{b^{k_j} x_j}=0).\]
Moreover, it follows from the inclusion-exclusion principle that
\begin{equation}\label{eq:in_ex}
|A(u,\bsk+\bsone_u)| = 
\sum_{v \subseteq u} (-1)^{|u|-|v|} |V(u,\bsk+\bsone_v)|,
\end{equation}
holds for any $u \subseteq 1{:}s$, where $\bsk+\bsone_v$ denotes the vector $\bsell\in \NN_0^{|u|}$ such that $\ell_j=k_j+1$ if $j\in v$ and $\ell_j=k_j$ if $j\in u\setminus v$.

We show below that the gain coefficients can be expressed in a simplified form when the underlying points $\bsx_0,\ldots,\bsx_{N-1}$ are a digital net in base $b$. 
Although essentially the same result can be found in the proof of \cite[Corollary~13.7]{DP10}, we give a proof for the sake of completeness.

\begin{lemma}\label{lem:gain_via_counting}
Let $P_N$ be a digital net in base $b$ with $N=b^m$ points. Then, for any non-empty $u \subseteq 1{:}s$ and $\bsk=(k_j)_{j\in u}\in \bN_0^{|u|}$, it holds that
\[
\Gamma_{u,\bsk}
= \frac{b^{m-|\bsk|}}{(b-1)^{|u|}} |A(u,\bsk+\bsone_u) \cap P_N^\perp|.
\]
\end{lemma}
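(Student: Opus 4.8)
The plan is to start from Definition~\ref{def:gain} and rewrite each factor in the product over $j\in u$ using the Walsh–Dirichlet kernel identity \eqref{eq:sum_in_V}. Observe that for a single coordinate, $b\,\chi(\floor{b^{k+1}x}=\floor{b^{k+1}x'})$ equals $\sum_{l<b^{k+1}}\wal_l(x)\overline{\wal_l(x')}$ applied after translating ``same digit block'' into ``$\floor{b^{k+1}(x-x')}$-type'' statements; more precisely, using \eqref{eq:sum_in_V} one has $\chi(\floor{b^{k}x_{i,j}}=\floor{b^{k}x_{i',j}})=b^{-k}\sum_{l_j<b^{k}}\wal_{l_j}(x_{i,j})\overline{\wal_{l_j}(x_{i',j})}$. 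Substituting the $k_j+1$ and $k_j$ versions of this into the numerator of Definition~\ref{def:gain} and expanding the product over $j\in u$, the difference of the two kernels over coordinate $j$ produces exactly a sum over $l_j$ in the annulus $b^{k_j}\le l_j<b^{k_j+1}$ — this is where the set $A(u,\bsk+\bsone_u)$ enters, via the inclusion–exclusion bookkeeping recorded in \eqref{eq:in_ex}. After collecting the normalizing powers of $b$ from each coordinate, one obtains
\[
\Gamma_{u,\bsk}=\frac{1}{N}\cdot\frac{b^{|u|}}{b^{|\bsk|+|u|}(b-1)^{|u|}}\sum_{\bsl\in A(u,\bsk+\bsone_u)}\sum_{i,i'=0}^{N-1}\wal_{\bsl}(\bsx_i)\overline{\wal_{\bsl}(\bsx_{i'})}.
\]

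Next I would recognize the inner double sum as $\left|\sum_{i=0}^{N-1}\wal_{\bsl}(\bsx_i)\right|^2$ and apply Lemma~\ref{lem:dual-walsh}: for a digital net with $N=b^m$ points this equals $N^2$ when $\bsl\in P_N^\perp$ and $0$ otherwise. Hence $\sum_{\bsl\in A(u,\bsk+\bsone_u)}\left|\sum_i\wal_{\bsl}(\bsx_i)\right|^2=N^2\,|A(u,\bsk+\bsone_u)\cap P_N^\perp|$. Plugging this in and simplifying the constant, $\frac{1}{N}\cdot\frac{b^{|u|}}{b^{|\bsk|+|u|}(b-1)^{|u|}}\cdot N^2 = \frac{b^m}{b^{|\bsk|}(b-1)^{|u|}}=\frac{b^{m-|\bsk|}}{(b-1)^{|u|}}$, which yields the claimed formula.

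I expect the main obstacle to be the careful combinatorial bookkeeping in the expansion step: each coordinate contributes a \emph{difference} of two nested kernels, so expanding $\prod_{j\in u}(b^{-k_j-1}\sum_{l_j<b^{k_j+1}}-b^{-k_j}\sum_{l_j<b^{k_j}})(\cdots)$ produces $2^{|u|}$ terms that must be reorganized into a single sum over the product of annuli, using \eqref{eq:in_ex} to see that the signed combination of the $V(u,\bsk+\bsone_v)$ ranges collapses exactly to $A(u,\bsk+\bsone_u)$. One has to be attentive that the power-of-$b$ prefactors $b^{-k_j-1}$ versus $b^{-k_j}$ do \emph{not} match across the two kernels, so the algebra is not a literal instance of inclusion–exclusion on sets; rather, after pulling out a common factor $b^{-k_j-1}$ from each coordinate (so the $l_j<b^{k_j}$ kernel carries an extra factor $b$), the factor $(b\cdot\text{[narrow kernel]})$ against $(\text{[wide kernel]})$ is precisely the numerator structure $b\chi(\cdots)-\chi(\cdots)$ in Definition~\ref{def:gain}, and the bookkeeping goes through. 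Everything else — recognizing the square, invoking Lemma~\ref{lem:dual-walsh}, and collecting constants — is routine.
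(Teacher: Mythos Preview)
Your overall plan is sound and, once the double sum is written as $\bigl|\sum_i\wal_{\bsl}(\bsx_i)\bigr|^2$ and Lemma~\ref{lem:dual-walsh} is applied, the constants collapse exactly as you say. The gap is in the step you call ``routine'': the two-point kernel identity
\[
\chi\bigl(\lfloor b^{k}x\rfloor=\lfloor b^{k}x'\rfloor\bigr)=b^{-k}\sum_{l<b^{k}}\wal_{l}(x)\,\overline{\wal_{l}(x')}
\]
is \emph{not} true pointwise for a general prime power base. Equation~\eqref{eq:sum_in_V} gives the one-variable identity $\sum_{l<b^k}\wal_l(y)=b^k\chi(\lfloor b^k y\rfloor=0)$, and to deduce your two-variable version you implicitly use $\wal_l(x)\,\overline{\wal_l(x')}=\wal_l(x\ominus x')$. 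That multiplicativity fails when $b$ is a proper prime power, because $\varphi\colon\Zb\to\Fb$ is merely a bijection, not an additive homomorphism. For instance, with $b=4$, $\varphi(0)=0$, $\varphi(1)=1$, $\varphi(2)=\alpha$, $\varphi(3)=\alpha+1$ (where $\alpha^2=\alpha+1$), and $x,x'$ having first digits $1$ and $2$, one computes $\sum_{l=0}^{3}\wal_l(x)\overline{\wal_l(x')}=-2i\neq 0=4\,\chi(1=2)$. Thus your displayed expression for $\Gamma_{u,\bsk}$ as a sum over $A(u,\bsk+\bsone_u)$ is not derived correctly; it happens to hold for digital nets only because both sides separately simplify to the same dual-net count, not because the intermediate pointwise identity is valid.

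The paper avoids this trap by using the digital-net group structure \emph{before} invoking the kernel: since $\bsx_i\ominus\bsx_{i'}=\bsx_{i\ominus i'}$, one has $\chi(\lfloor b^{k_j}x_{i,j}\rfloor=\lfloor b^{k_j}x_{i',j}\rfloor)=\chi(\lfloor b^{k_j}x_{i\ominus i',j}\rfloor=0)$, which collapses the double sum to $N$ times a single sum over $P_N$, and only then is the genuine one-variable identity \eqref{eq:sum_in_V} applied. After that, the inclusion--exclusion bookkeeping \eqref{eq:in_ex} proceeds exactly as you outlined. So your argument is repairable, but the fix is precisely the extra step the paper inserts; without it the proof does not go through in prime power base.
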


\begin{proof}
In this proof, we denote the digit-wise $b$-adic subtraction by $\ominus$. That is, for $x,x'\in [0,1)$ with $x=\xi_1b^{-1}+\xi_2b^{-2}+\cdots$ and $x'=\xi'_1b^{-1}+\xi'_2b^{-2}+\cdots$, we write
\[ x\ominus x' = \frac{\eta_1}{b}+\frac{\eta_2}{b^2}+\cdots , \quad \text{with $\eta_i=\varphi^{-1}(\varphi(\xi_i)-\varphi(\xi'_i))$,}\]
provided that infinitely many $\eta_i$ are different from $b-1$.
For the case of vectors in $[0,1)^s$, we apply $\ominus$ componentwise. We also apply $\ominus$ to vectors in $\NN_0^s$ based on their $b$-adic (finite) expansions. Then we see from Definition~\ref{def:digital_net} that $\bsx_i \ominus \bsx_{i'} = \bsx_{i \ominus i'}$ holds for any $i,i'\in \{0,\ldots,b^m-1\}$.
Thus, together with the equality on the Walsh-Dirichlet kernel and Lemma~\ref{lem:dual-walsh}, it follows that
\begin{align*}
\frac{1}{N} \sum_{i,i'=0}^{N-1} \prod_{j \in u} \chi(\floor{b^{k_j}x_{i,j}}=\floor{b^{k_j}x_{i',j}})
&= \sum_{i=0}^{N-1} \prod_{j \in u} \chi(\floor{b^{k_j}x_{i,j}}=0) \\
&= b^{-|\bsk|}\sum_{\bsx\in P_N} \sum_{\bsl \in V(u,\bsk)} \wal_{\bsl}(\bsx) \\
&= b^{-|\bsk|} \sum_{\bsl \in V(u,\bsk)} \sum_{\bsx\in P_N} \wal_{\bsl}(\bsx) \\
&= b^{m-|\bsk|} |V(u,\bsk) \cap P_N^\perp|.
\end{align*}

Using this result and the inclusion-exclusion principle \eqref{eq:in_ex}, we have
\begin{align*}
\Gamma_{u,\bsk}
&= \frac{1}{N} \sum_{i,i'=0}^{N-1}\prod_{j \in u}\frac{b \chi(\floor{b^{k_j+1}x_{i,j}}=0) - \chi(\floor{b^{k_j}x_{i,j}}=0)}{b-1}\\
&= \frac{1}{(b-1)^{|u|}} \sum_{i=0}^{N-1}
\sum_{v \subseteq u} b^{|v|}(-1)^{|u|-|v|}
\prod_{j \in v} \chi(\floor{b^{k_j+1}x_{i,j}}=0) \prod_{j \in u \setminus v} \chi(\floor{b^{k_j}x_{i,j}}=0)\\
&= \frac{1}{(b-1)^{|u|}}
\sum_{v \subseteq u} b^{|v|}(-1)^{|u|-|v|} \cdot b^{m-|\bsk+\bsone_v|} |V(u,\bsk+\bsone_v) \cap P_N^\perp| \\
&= \frac{b^{m-|\bsk|}}{(b-1)^{|u|}}
\sum_{v \subseteq u} (-1)^{|u|-|v|} |V(u,\bsk+\bsone_v) \cap P_N^\perp| \\
&= \frac{b^{m-|\bsk|}}{(b-1)^{|u|}}|A(u,\bsk+\bsone_u) \cap P_N^\perp|.
\end{align*}
This completes the proof.
\end{proof}

A bound on $|A(u,\bsk+\bsone_u) \cap P_N^\perp|$ was given in \cite[Lemma~13.8]{DP10} with the proof based on counting the number of solutions of a linear equation over $\Fb$. In fact, the argument made in the proof can be easily strengthened to obtain the following bound.
Note that we will give a different proof in Section~\ref{sec:proof}.

\begin{lemma}\label{lem:bound_A}
Let $P_N$ be a digital $(t,m,s)$-net in base $b$ with $N=b^m$ points. Then, for any non-empty $u \subseteq 1{:}s$ and $\bsk=(k_j)_{j\in u}\in \bN_0^{|u|}$, we have
\[
|A(u,\bsk + \bsone_u) \cap P_N^\perp| \le
\begin{cases}
0 & \text{if $|\bsk| \le m-t-|u|$},\\
(b-1)^{|u|+|\bsk|-(m-t)} & \text{if $m-t-|u| < |\bsk| \le m-t$},\\
(b-1)^{|u|}b^{|\bsk|-(m-t)} & \text{if $|\bsk| > m-t$}.
\end{cases}
\]
\end{lemma}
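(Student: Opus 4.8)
The plan is to bound $|A(u,\bsk+\bsone_u)\cap P_N^\perp|$ by relating membership in the dual net to a linear-algebra condition and then counting solutions. Write $u=\{r_1,\dots,r_{|u|}\}$. A vector $\bsl\in A(u,\bsk+\bsone_u)$ has $l_j=0$ for $j\notin u$ and, for $j=r_i\in u$, satisfies $b^{k_i}\le l_{r_i}<b^{k_i+1}$; equivalently, in the $b$-adic expansion $l_{r_i}=\lambda_0+\lambda_1 b+\cdots$, the digits $\lambda_{k_i+1},\lambda_{k_i+2},\dots$ all vanish while $\lambda_{k_i}\ne 0$. Thus $\nu_n(l_{r_i})$ is supported on coordinates $0,\dots,k_i$ with a nonzero entry in coordinate $k_i$. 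The dual-net condition $\sum_{i}C_{r_i}^\top\nu_n(l_{r_i})=\bszero$ then reads exactly as $C_{u,\bsk+\bsone_u}^\top \bsy=\bszero$, where $\bsy\in\Fb^{|\bsk|+|u|}$ is the stacked vector of the relevant digits of the $l_{r_i}$'s, and where the "last" row in each block (the one indexed by $k_i$) is forced to be nonzero. So $|A(u,\bsk+\bsone_u)\cap P_N^\perp|$ equals the number of $\bsy$ in the kernel of $C_{u,\bsk+\bsone_u}^\top$ with none of the $|u|$ distinguished coordinates equal to zero.

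First I would dispose of the case $|\bsk|\le m-t-|u|$: here $|\bsk+\bsone_u|=|\bsk|+|u|\le m-t$, so by the rank characterization \eqref{eq:tvalue_rank} the matrix $C_{u,\bsk+\bsone_u}$ is full row rank; hence $C_{u,\bsk+\bsone_u}^\top$ is injective and its kernel is trivial, forcing the count to be $0$. For the remaining cases I would bound the kernel dimension. Let $C_{u,\bsk+\bsone_u}$ have $R:=|\bsk|+|u|$ rows. Split off the $|u|$ distinguished rows (the $k_i$-th row of each block $C_{r_i}$) and the $|\bsk|$ remaining rows; the submatrix formed by those $|\bsk|$ remaining rows is $C_{u,\bsk}$ in a permuted row order. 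A crude bound uses $\dim\ker C_{u,\bsk+\bsone_u}^\top = R-\rank C_{u,\bsk+\bsone_u}$. When $|\bsk|>m-t$, I only use $\rank C_{u,\bsk+\bsone_u}\ge \rank C_{u,\bsk}$ together with the fact — again from \eqref{eq:tvalue_rank} — that every $C_{u',\bsk'}$ with $|\bsk'|\le m-t$ has full row rank, applied to a sub-collection of $m-t-|u|$ rows of $C_{u,\bsk}$, giving $\rank C_{u,\bsk+\bsone_u}\ge m-t-|u|$ once we also include the distinguished rows; actually the clean statement is $\rank C_{u,\bsk+\bsone_u}\ge m-t$ whenever $|\bsk|+|u|\ge m-t$, because any $m-t$ of its rows that still form a valid $C_{v,\bsk'}$ are independent. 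This yields $\dim\ker \le |\bsk|+|u|-(m-t)$ and hence at most $b^{|\bsk|+|u|-(m-t)}$ solutions; replacing $|u|$ factors of $b$ by the sharper count that each distinguished coordinate ranges over $\Fb\setminus\{0\}$ once the free parameters are chosen gives the claimed $(b-1)^{|u|}b^{|\bsk|-(m-t)}$.

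The middle range $m-t-|u|<|\bsk|\le m-t$ is the delicate one and I expect it to be the main obstacle. Here $|\bsk+\bsone_u|$ may exceed $m-t$ even though $|\bsk|\le m-t$, so $C_{u,\bsk+\bsone_u}$ need not be full row rank, but $C_{u,\bsk}$ is. The idea is: parametrize solutions $\bsy$ by first choosing the $|\bsk|$ non-distinguished coordinates freely, then asking how the $|u|$ distinguished coordinates are constrained. Since $C_{u,\bsk}$ is full row rank, the non-distinguished rows are linearly independent; the distinguished rows, $|u|$ of them, when adjoined bring the total to $|\bsk|+|u|$ rows of which at least $m-t$ must be independent. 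Reorganizing, the number of dependencies among all $R$ rows is at most $R-(m-t)=|\bsk|+|u|-(m-t)$, and since $C_{u,\bsk}$'s rows contribute none of these dependencies, each dependency must involve at least one distinguished row; a counting/linear-algebra argument then shows the distinguished coordinates of $\bsy$ lie in an affine subspace of dimension at most $|\bsk|+|u|-(m-t)$ whose defining equations, combined with the nonvanishing requirement, leave at most $(b-1)^{|\bsk|+|u|-(m-t)}$ choices — while the non-distinguished coordinates, though chosen "freely," are then pinned down, so the total does not pick up extra factors of $b$. Making the bookkeeping in this last step precise — correctly accounting for which coordinates are free versus determined, and squeezing out the $(b-1)$ rather than $b$ per distinguished coordinate — is the crux; the alternative proof promised in Section~\ref{sec:proof} presumably reorganizes exactly this count via the Walsh/duality machinery, and I would cross-check the two.
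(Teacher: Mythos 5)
Your translation of membership in $A(u,\bsk+\bsone_u)\cap P_N^\perp$ into the kernel of $C_{u,\bsk+\bsone_u}^\top$ with $|u|$ distinguished nonvanishing coordinates is exactly right, as is the first case (trivial kernel for $|\bsk|\le m-t-|u|$). However, the remaining two cases rest on a claim you never establish: that restricting to kernel elements whose $|u|$ distinguished coordinates are all nonzero lets you "replace" $|u|$ factors of $b$ by $(b-1)$. This is not automatic. A subspace $W\subseteq\Fb^R$ of dimension $d$ has $b^d$ elements, but it is not a priori clear that at most $(b-1)^{|u|}b^{d-|u|}$ of them avoid zero in $|u|$ fixed coordinates; you would need, e.g., that the projection onto those coordinates has fibers of constant size and that the image (a subspace of $\Fb^{|u|}$) meets $(\Fb\setminus\{0\})^{|u|}$ in at most $(b-1)^{\dim\mathrm{image}}$ points. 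That last statement is a genuine lemma requiring proof, not a heuristic. You flag this yourself as "the crux," and indeed it is precisely where the argument is missing.

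The paper closes this gap with a clean two-step structure. Lemma~\ref{lem:decompose_A} factorizes $|A(u,\bsk+\bsone_u)\cap P^\perp|$ as $|V(u,\bsk)\cap P^\perp|\cdot |Q(u,\bsk+\bsone_u,P)|$, where $V(u,\bsk)\cap P^\perp$ is the kernel of the projection $\tilde{\proj}_{u,\bsk+\bsone_u}$ onto the $|u|$ distinguished digits and $Q$ is the intersection of its image with $(\Fb\setminus\{0\})^{|u|}$; this is the rigorous version of your "fibers times admissible images" picture. Then Lemma~\ref{lem:bound_nonzero} proves the crucial counting fact: for any $d$-dimensional subspace $W\subseteq\Fb^n$ with $d\ge 1$, $|W\cap(\Fb\setminus\{0\})^n|\le(b-1)^d$, via a reduced column echelon form (each combination hitting all-nonzero forces every coefficient to be nonzero). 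With these two pieces, the three cases follow from the rank estimates you already correctly identified. So your outline points in the right direction, but to be a proof you must (i) make the kernel/image decomposition explicit and (ii) actually prove the $(b-1)^d$ bound for subspaces meeting $(\Fb\setminus\{0\})^n$, neither of which appears in the proposal.
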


By using Lemmas \ref{lem:gain_via_counting} and \ref{lem:bound_A}, we obtain the following bound on the gain coefficients.
\begin{theorem}\label{thm:gain_bound}
Let $P_N$ be a digital $(t,m,s)$-net in base $b$ with $N=b^m$ points.
Then, for any non-empty $u \subseteq 1{:}s$, we have
\[
\Gamma_{u,\bsk} \le
\begin{cases}
0 & \text{if $|\bsk| \le m-t-|u|$},\\
b^{m-|\bsk|}(b-1)^{|\bsk|-m+t} \le \displaystyle \frac{b^{t+|u|-1}}{(b-1)^{|u|-1}} & \text{if $m-t-|u| < |\bsk| \le m-t$},\\
b^{t} & \text{if $|\bsk| > m-t$}.\\
\end{cases}
\]
In particular we have $\Gamma \le b^{t+s-1}/(b-1)^{s-1}$.
\end{theorem}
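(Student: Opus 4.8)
The plan is to derive Theorem~\ref{thm:gain_bound} by combining the exact formula of Lemma~\ref{lem:gain_via_counting} with the counting bound of Lemma~\ref{lem:bound_A}, and then optimizing the resulting expression over $|\bsk|$ in the middle regime. Recall that Lemma~\ref{lem:gain_via_counting} gives
\[ \Gamma_{u,\bsk} = \frac{b^{m-|\bsk|}}{(b-1)^{|u|}}\,|A(u,\bsk+\bsone_u)\cap P_N^\perp|, \]
so each of the three cases of Theorem~\ref{thm:gain_bound} corresponds directly to the matching case of Lemma~\ref{lem:bound_A}. First I would dispose of the case $|\bsk|\le m-t-|u|$: here Lemma~\ref{lem:bound_A} says the cardinality is $0$, hence $\Gamma_{u,\bsk}=0$. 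Next, for $|\bsk|>m-t$, plugging the bound $(b-1)^{|u|}b^{|\bsk|-(m-t)}$ into the formula yields
\[ \Gamma_{u,\bsk}\le \frac{b^{m-|\bsk|}}{(b-1)^{|u|}}\,(b-1)^{|u|}b^{|\bsk|-(m-t)} = b^{m-|\bsk|+|\bsk|-m+t}=b^t, \]
which is the claimed bound.

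The middle regime $m-t-|u|<|\bsk|\le m-t$ is where the real work is. Substituting the bound $(b-1)^{|u|+|\bsk|-(m-t)}$ from Lemma~\ref{lem:bound_A} gives
\[ \Gamma_{u,\bsk}\le \frac{b^{m-|\bsk|}}{(b-1)^{|u|}}\,(b-1)^{|u|+|\bsk|-(m-t)} = b^{m-|\bsk|}(b-1)^{|\bsk|-m+t}, \]
which is exactly the first expression in the middle case. To get the closed bound $b^{t+|u|-1}/(b-1)^{|u|-1}$, I would view $b^{m-|\bsk|}(b-1)^{|\bsk|-m+t} = b^t\,(b-1)^{-t}\cdot b^{m-t-|\bsk|}(b-1)^{|\bsk|-m+2t}$; cleaner is to write it as a function of the single integer variable $d:=(m-t)-|\bsk|$, which on this range satisfies $0\le d<|u|$, namely $b^{m-|\bsk|}(b-1)^{|\bsk|-m+t}=b^{t+d}(b-1)^{-d}=b^t\,(b/(b-1))^{d}$. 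Since $b/(b-1)>1$, this is increasing in $d$, so it is maximized at the largest admissible $d$, which is $d=|u|-1$ (because $d<|u|$ and $d$ is an integer). Hence the maximum over the middle range is $b^t(b/(b-1))^{|u|-1}=b^{t+|u|-1}/(b-1)^{|u|-1}$, establishing the stated inequality.

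Finally, for the global bound $\Gamma\le b^{t+s-1}/(b-1)^{s-1}$, I would take the maximum of the three case-bounds over all non-empty $u\subseteq 1{:}s$ and all $\bsk$. The nonzero cases give either $b^t$ (when $|\bsk|>m-t$) or $b^{t+|u|-1}/(b-1)^{|u|-1}$ (in the middle range). Since $b/(b-1)>1$, the quantity $b^{t+|u|-1}/(b-1)^{|u|-1}=b^t(b/(b-1))^{|u|-1}$ is increasing in $|u|$ and already exceeds $b^t$ for $|u|\ge 1$; it is therefore maximized at $|u|=s$, yielding $b^{t+s-1}/(b-1)^{s-1}$, which dominates $b^t$ as well. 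This gives $\Gamma=\max_{u,\bsk}\Gamma_{u,\bsk}\le b^{t+s-1}/(b-1)^{s-1}$, completing the proof.

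I do not anticipate a serious obstacle, since all three cases reduce to elementary algebra once Lemmas~\ref{lem:gain_via_counting} and~\ref{lem:bound_A} are in hand; the only point requiring a moment's care is the monotonicity argument in the middle regime — correctly identifying that the exponent $d=(m-t)-|\bsk|$ ranges over the integers $0,1,\dots,|u|-1$ and that the bound is increasing in $d$, so that the worst case is $d=|u|-1$ rather than an endpoint of the $|\bsk|$-interval. One should also double-check the boundary: when $|\bsk|=m-t$ exactly (so $d=0$) the middle-case bound reads $b^t$, consistently matching the third case at its boundary, which is a useful sanity check that the case split is seamless.
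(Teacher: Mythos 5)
Your proposal is correct and follows exactly the route the paper intends: the paper states Theorem~\ref{thm:gain_bound} as an immediate consequence of Lemmas~\ref{lem:gain_via_counting} and~\ref{lem:bound_A} without spelling out the algebra, and you have supplied precisely that algebra, including the clean substitution $d=(m-t)-|\bsk|\in\{0,\dots,|u|-1\}$ to show the middle-regime bound is increasing in $d$ and hence maximized at $d=|u|-1$. All three cases, the boundary consistency check at $|\bsk|=m-t$, and the final maximization over $|u|$ are verified correctly.
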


\begin{remark}
Our result covers \cite[Corollary~3]{PO21} which only applies to digital nets in base 2. Furthermore, if $P_N$ is a digital $(0,m,s)$-net in base $b\geq s$, such as the first $b^m$ points of Faure sequence, we get $\Gamma_{u,\bsk}\leq  (b/(b-1))^{s-1}$, which was obtained previously in \cite{Ow97}.
\end{remark}

\subsection{Yet another proof of Lemma~\ref{lem:bound_A}} \label{sec:proof}

From now on, with a different approach from the proof of \cite[Lemma~13.8]{DP10}, we give upper bounds on $|A(u,\bsk+\bsone_u) \cap P^\perp|$ and other related quantities for $P$ being a digital net in prime power base $b$. In \cite[Remark~1]{PO21}, Pan and Owen stated that their approach does not work for general base, whereas our dual net approach enables to generalize some of their results to prime power base.

In what follows, the sets $V(u,\bsk)$ and $A(u,\bsk)$ as given in Definition~\ref{def:VandA} are both understood to be endowed with digit-wise addition and multiplication over $\Fb$.
To proceed our argument, let us define a projection
$\proj_{u,\bsk+\bsone_u} \colon V(u,\bsk+\bsone_u) \to \Fb^{|u|}$ by
\[
\proj_{u,\bsk+\bsone_u} (l_1,\dots,l_s) = (l_{j,k_j+1})_{j \in u}
\qquad \text{where $l_j = \sum_{i=1}^{k_j+1} l_{j,i}b^{i-1}$}
\]
and also define its restriction on $V(u,\bsk+\bsone_u) \cap P^\perp$ by
\[
\tilde{\proj}_{u,\bsk+\bsone_u} := \proj_{u,\bsk+\bsone_u}|_{V(u,\bsk+\bsone_u) \cap P^\perp}.
\]
Note that both $\proj_{u,\bsk+\bsone_u}$ and $\tilde{\proj}_{u,\bsk+\bsone_u}$ are $\Fb$-linear maps. Further, we define
\[
Q(u,\bsk+\bsone_u,P) := \Image (\tilde{\proj}_{u,\bsk+\bsone_u}) \cap (\Fb \setminus \{0\})^{|u|}.
\]
Our key observation is the following lemma.
\begin{lemma}\label{lem:decompose_A}
For any $u \subseteq 1{:}s$ and $\bsk \in \NN_0^{|u|}$, we have
\[
|A(u,\bsk + \bsone_u) \cap P^\perp|
= |V(u,\bsk) \cap P^\perp|
\times |Q(u,\bsk+\bsone_u,P)|.
\]
\end{lemma}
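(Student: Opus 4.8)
The plan is to exhibit an explicit bijection between $A(u,\bsk+\bsone_u) \cap P^\perp$ and the product set $(V(u,\bsk) \cap P^\perp) \times Q(u,\bsk+\bsone_u,P)$. The natural candidate is built from the projection $\tilde{\proj}_{u,\bsk+\bsone_u}$ together with the "truncation" map that deletes the top digit in each coordinate $j \in u$. Concretely, given $\bsl \in V(u,\bsk+\bsone_u)$, write $l_j = \sum_{i=1}^{k_j+1} l_{j,i} b^{i-1}$ for $j \in u$, and let $\tau(\bsl)$ be the vector obtained by setting the coordinate-$j$ entry to $\sum_{i=1}^{k_j} l_{j,i} b^{i-1}$ for $j \in u$ and keeping zeros elsewhere; note $\tau(\bsl) \in V(u,\bsk)$. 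The claim is that $\bsl \mapsto (\tau(\bsl), \proj_{u,\bsk+\bsone_u}(\bsl))$ restricts to a bijection from $A(u,\bsk+\bsone_u) \cap P^\perp$ onto $(V(u,\bsk) \cap P^\perp) \times Q(u,\bsk+\bsone_u,P)$.

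The key structural input is $\Fb$-linearity of everything in sight: $V(u,\bsk+\bsone_u)$, $V(u,\bsk)$, and $P^\perp$ are all $\Fb$-linear subspaces of $\Fb^{n s}$ (interpreting $\NN_0^s$ via finite $b$-adic digit vectors), the projection $\proj_{u,\bsk+\bsone_u}$ is $\Fb$-linear, and the truncation $\tau$ is $\Fb$-linear as well. Moreover $\bsl \in A(u,\bsk+\bsone_u)$ means precisely that the top digit $l_{j,k_j+1}$ is nonzero for every $j \in u$, i.e. that $\proj_{u,\bsk+\bsone_u}(\bsl) \in (\Fb \setminus \{0\})^{|u|}$; and $V(u,\bsk+\bsone_u)$ decomposes as the (internal) direct sum of $\Ker(\proj_{u,\bsk+\bsone_u}) = V(u,\bsk)$ and a complement spanned by the top-digit basis vectors $\bsb_j$ (the digit vector with a $1$ in position $k_j+1$ of coordinate $j$). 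So the steps are: (i) identify $V(u,\bsk+\bsone_u) = V(u,\bsk) \oplus \spn_{\Fb}\{\bsb_j : j \in u\}$ and observe that this is compatible with the pair-valued map above on all of $V(u,\bsk+\bsone_u)$; (ii) intersect with $P^\perp$: since $P^\perp$ is a subspace, $V(u,\bsk+\bsone_u) \cap P^\perp$ is a subspace, and one must check that for $\bsc \in \Image(\tilde{\proj}_{u,\bsk+\bsone_u})$ the fiber $\tilde{\proj}_{u,\bsk+\bsone_u}^{-1}(\bsc)$ is a coset of $\Ker(\tilde{\proj}_{u,\bsk+\bsone_u}) = V(u,\bsk) \cap P^\perp$, hence has size $|V(u,\bsk)\cap P^\perp|$, and that translating by any lift of $\bsc$ back into $V(u,\bsk)$ gives the bijection with $V(u,\bsk)\cap P^\perp$; (iii) restrict attention to the fibers over $\bsc$ with all coordinates nonzero, i.e. over $Q(u,\bsk+\bsone_u,P)$, which is exactly the condition cutting $V(u,\bsk+\bsone_u)\cap P^\perp$ down to $A(u,\bsk+\bsone_u)\cap P^\perp$. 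Summing $|V(u,\bsk)\cap P^\perp|$ over the $|Q(u,\bsk+\bsone_u,P)|$ relevant fibers gives the stated product formula.

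The main subtlety — the step I would be most careful about — is the compatibility between the "digitwise" direct-sum decomposition of the ambient space $V(u,\bsk+\bsone_u)$ and the subspace $P^\perp$: a priori $V(u,\bsk+\bsone_u)\cap P^\perp$ need not split as a direct sum matching $V(u,\bsk)\cap P^\perp$ and some complement, so one cannot naively "multiply dimensions." The clean way around this is to avoid any claim that the complement behaves well inside $P^\perp$, and instead argue purely via the fiber structure of the linear surjection $\tilde{\proj}_{u,\bsk+\bsone_u} : V(u,\bsk+\bsone_u)\cap P^\perp \twoheadrightarrow \Image(\tilde{\proj}_{u,\bsk+\bsone_u})$: all nonempty fibers have the common cardinality $|\Ker(\tilde{\proj}_{u,\bsk+\bsone_u})| = |V(u,\bsk)\cap P^\perp|$, and $A(u,\bsk+\bsone_u)\cap P^\perp$ is the union of exactly those fibers indexed by $Q(u,\bsk+\bsone_u,P)$. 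A minor point to get right is the well-definedness of $\tau$ and the verification that $\Ker(\proj_{u,\bsk+\bsone_u})$ inside $V(u,\bsk+\bsone_u)$ is genuinely $V(u,\bsk)$ (using $l_{j,k_j+1} = 0 \iff l_j < b^{k_j}$), after which the counting is immediate.
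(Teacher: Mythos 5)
Your closing fiber-counting argument is correct and is essentially the paper's proof: $\Ker\tilde{\proj}_{u,\bsk+\bsone_u}=V(u,\bsk)\cap P^\perp$, so every nonempty fiber of the restricted projection is a coset of this kernel and has cardinality $|V(u,\bsk)\cap P^\perp|$, and $A(u,\bsk+\bsone_u)\cap P^\perp$ is precisely the union of the fibers lying over $Q(u,\bsk+\bsone_u,P)$. One point to tighten: the map $\bsl\mapsto(\tau(\bsl),\proj_{u,\bsk+\bsone_u}(\bsl))$ you posit at the outset is generally \emph{not} a bijection onto $(V(u,\bsk)\cap P^\perp)\times Q(u,\bsk+\bsone_u,P)$, because for $\bsl\in A(u,\bsk+\bsone_u)\cap P^\perp$ the truncation $\tau(\bsl)$ need not lie in $P^\perp$; e.g.\ in base $2$ with $u=\{1\}$, $\bsk=(1)$, if $P^\perp$ meets $\{0,1,2,3\}$ in $\{0,3\}$ then $\tau(3)=1\notin P^\perp$. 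You gesture at this issue when you caution against assuming the digit-wise complement restricts well to $P^\perp$, and your pivot to counting cosets of the kernel is exactly the right repair---after which $\tau$ plays no role and is best dropped from the writeup.
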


\begin{proof}
It holds that $\tilde{\proj}_{u,\bsk+\bsone_u}$ is a 
$|V(u,\bsk) \cap P^\perp|:1$ map since we have
\[
\Ker \tilde{\proj}_{u,\bsk+\bsone_u} = V(u,\bsk) \cap P^\perp.
\]
On the other hand, we have
\[
A(u,\bsk + \bsone_u) \cap P^\perp = \proj_{u,\bsk+\bsone_u}^{-1}(Q(u,\bsk + \bsone_u,P)).
\]
The lemma follows from these two results.
\end{proof}

Let us give a bound on $|V(u,\bsk)\cap P^\perp|$.
Since $V(u,\bsk)\cap P^\perp$ can be seen as the kernel of $C_{u,\bsk}^\top$, it follows that
\begin{equation}\label{eq:V_rank}
\dim (V(u,\bsk) \cap P^\perp) = \dim \Ker C_{u,\bsk}^\top = |\bsk| - \rank (C_{u,\bsk}).
\end{equation}
Furthermore, we have a bound on $\rank (C_{u,\bsk})$ from \eqref{eq:tvalue_rank},
i.e.,
$\rank (C_{u,\bsk}) = |\bsk|$ if $|\bsk| \le m-t$ and 
$\rank (C_{u,\bsk}) \ge m-t$ if $|\bsk| > m-t$.
These results imply the following bound.

\begin{lemma}\label{lem:bound_V}
Let $P$ be a digital $(t,m,s)$-net in base $b$.
Then, for any $u \subseteq 1{:}s$ and $\bsk \in \NN_0^{|u|}$, we have 
\[
|V(u,\bsk) \cap P^\perp|
\le 
\begin{cases}
1 & \text{if $|\bsk| \le m-t$,}\\
b^{|\bsk|-m+t} & \text{otherwise}.
\end{cases}
\]
\end{lemma}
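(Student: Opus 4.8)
The plan is to read the claim off from the two displayed observations that immediately precede it: the rank identity \eqref{eq:V_rank} for $\dim(V(u,\bsk)\cap P^\perp)$ and the rank bounds for $C_{u,\bsk}$ coming from the $t$-value characterization \eqref{eq:tvalue_rank}. No new idea is needed; the proof is an assembly of these facts, so the work is in making each of them precise.

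First I would make \eqref{eq:V_rank} precise. Identify $V(u,\bsk)$ with $\Fb^{|\bsk|}$ via the digit map $\bsl=(l_j)_{j\in 1{:}s}\mapsto (l_{j,i})_{j\in u,\,1\le i\le k_j}$, where $l_j=\sum_{i\ge 1}l_{j,i}b^{i-1}$, endowed with digit-wise operations over $\Fb$. For $\bsl\in V(u,\bsk)$ one has $l_j=0$ for $j\notin u$ and $\nu_n(l_j)$ supported on its first $k_j$ coordinates for $j\in u$, so the defining condition $\sum_j C_j^\top\nu_n(l_j)=\bszero$ of $P^\perp$ reduces to exactly $C_{u,\bsk}^\top\bsl=\bszero\in\Fb^m$. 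Hence $V(u,\bsk)\cap P^\perp=\Ker C_{u,\bsk}^\top$, and the rank--nullity theorem gives $\dim(V(u,\bsk)\cap P^\perp)=|\bsk|-\rank(C_{u,\bsk})$, i.e.\ $|V(u,\bsk)\cap P^\perp|=b^{|\bsk|-\rank(C_{u,\bsk})}$. (If $u=\emptyset$ the claim is trivial, since then $V(u,\bsk)=\{\bszero\}\subseteq P^\perp$.)

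Next I would extract the rank bounds from \eqref{eq:tvalue_rank}, which says that the smallest value of $|\bsk|$, over all nonempty $u$ and all $\bsk\in\NN_0^{|u|}$, for which $C_{u,\bsk}$ fails to be full row rank equals $m+1-t$. Consequently: if $|\bsk|\le m-t$ then $C_{u,\bsk}$ is full row rank, so $\rank(C_{u,\bsk})=|\bsk|$; and if $|\bsk|>m-t$, pick any $\bsk'\in\NN_0^{|u|}$ with $\bsk'\le\bsk$ componentwise and $|\bsk'|=m-t$ (possible since $m-t\ge 0$), note that $C_{u,\bsk'}$ is obtained from $C_{u,\bsk}$ by deleting rows, hence is full row rank by the previous case, so $\rank(C_{u,\bsk})\ge\rank(C_{u,\bsk'})=m-t$. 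Substituting into the formula from the previous step yields $|V(u,\bsk)\cap P^\perp|=b^{0}=1$ when $|\bsk|\le m-t$, and $|V(u,\bsk)\cap P^\perp|=b^{|\bsk|-\rank(C_{u,\bsk})}\le b^{|\bsk|-m+t}$ when $|\bsk|>m-t$, which is exactly the asserted bound.

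The only steps requiring any care — and the closest thing to an obstacle here — are bookkeeping ones: correctly matching the digit-vector description of $V(u,\bsk)\cap P^\perp$ with $\Ker C_{u,\bsk}^\top$ (keeping track of the transpose and of the ambient dimension $|\bsk|$ versus $m$), and the elementary monotonicity of rank under deletion of rows invoked in the case $|\bsk|>m-t$. Neither is substantial, so I expect the proof to be short.
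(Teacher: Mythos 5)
Your proof is correct and follows essentially the same route as the paper: the paper itself derives this lemma directly from \eqref{eq:V_rank} and the rank bounds implied by \eqref{eq:tvalue_rank}, exactly as you do, merely leaving implicit the bookkeeping that you spell out (the identification $V(u,\bsk)\cap P^\perp=\Ker C_{u,\bsk}^\top$ and the row-deletion monotonicity of rank). The extra care you take with the $u=\emptyset$ edge case and the choice of $\bsk'\le\bsk$ with $|\bsk'|=m-t$ is sound and fills in details the paper omits.
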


Let us move on to bounding $|Q(u,\bsk+\bsone_u,P)|$.
We need the following lemma.
\begin{lemma}\label{lem:bound_nonzero}
Let $W \subseteq \Fb^n$ be a subspace
and $d := \dim W$. Then we have
\[
|W \cap (\Fb\setminus\{0\})^n| \le
\begin{cases}
0 & \text{if $d = 0$,}\\
(b-1)^{d} & \text{otherwise}.
\end{cases}
\]
\end{lemma}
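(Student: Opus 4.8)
The plan is to induct on the dimension $n$ of the ambient space $\Fb^n$, with the statement for all subspaces $W$ of all ambient dimensions as the induction hypothesis. The case $d=0$ is trivial since $W=\{\bszero\}$ contains no vector with all coordinates nonzero (as $n\ge 1$; if $n=0$ the set $(\Fb\setminus\{0\})^0$ is a single empty tuple, but then the claim should be read with the convention that there is nothing to bound, or one simply restricts to $n\ge 1$). So assume $d\ge 1$ and $n\ge 1$. If $W$ is contained in the coordinate hyperplane $\{x_n=0\}$, then $W\cap(\Fb\setminus\{0\})^n=\emptyset$ and we are done. Otherwise the linear functional $x\mapsto x_n$ is surjective on $W$, so its kernel $W_0:=W\cap\{x_n=0\}$ has dimension $d-1$, and $W$ is the disjoint union of the $b$ cosets $W_0+c\bse$ for $c\in\Fb$, where $\bse$ is any fixed element of $W$ with last coordinate $1$.

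The key step is to bound, for each fixed value $c$ of the last coordinate, the number of $x\in W$ with $x_n=c$ and $x_1,\dots,x_{n-1}$ all nonzero; call this number $M_c$. The vectors in $W$ with $x_n=c$ form an affine copy of $W_0$, and projecting onto the first $n-1$ coordinates identifies this set with an affine subspace of $\Fb^{n-1}$ of dimension $d-1$ (the projection is injective on $W_0$ because the only element of $W_0$ with all of its first $n-1$ coordinates zero is $\bszero$, provided $W_0\not\subseteq\{x_1=\cdots=x_{n-1}=0\}$; the degenerate case where $W_0$ lies in that line is handled separately and contributes at most one point total, comfortably within the bound). For $c\ne 0$, I want $M_c\le (b-1)^{d-1}$: this is exactly the assertion that a $(d-1)$-dimensional affine subspace of $\Fb^{n-1}$ meets $(\Fb\setminus\{0\})^{n-1}$ in at most $(b-1)^{d-1}$ points, which follows from the linear case (the induction hypothesis applied to the $(d-1)$-dimensional linear subspace obtained by translating) together with the elementary fact that translation does not increase this count — or more cleanly, one proves the affine version directly by the same hyperplane-splitting induction. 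For $c=0$ we have $M_0=0$ since the last coordinate must itself be nonzero. Summing, $|W\cap(\Fb\setminus\{0\})^n|=\sum_{c\in\Fb}M_c=\sum_{c\ne 0}M_c\le (b-1)\cdot(b-1)^{d-1}=(b-1)^d$, completing the induction.

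The main obstacle is that the naive induction is on \emph{linear} subspaces, but splitting off the last coordinate forces one to control \emph{affine} slices; so the right move is to prove the affine statement ``an affine subspace of dimension $d$ in $\Fb^n$ meets $(\Fb\setminus\{0\})^n$ in at most $(b-1)^d$ points, and in $0$ points if the affine subspace lies in a coordinate hyperplane $\{x_i=0\}$'' and run the induction on that. With the affine formulation the splitting is clean: either the slice in question is confined to $\{x_n=0\}$ (contributing $0$), or it fibers over the $b-1$ nonzero values of $x_n$ with each fiber an affine subspace of dimension $d-1$ in $\Fb^{n-1}$, to which the induction hypothesis applies. A small amount of care is needed only for the degenerate fibers where the projection to the first $n-1$ coordinates fails to be injective, but these occur only when the slice is a single point, which trivially satisfies every bound. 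Everything else is bookkeeping.
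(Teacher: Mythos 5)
Your proof is correct, and it takes a genuinely different route from the paper's. The paper's argument is a one-liner: choose a basis $(\bsw_1,\dots,\bsw_d)$ of $W$ in reduced column echelon form, so that each $\bsw_j$ has a pivot row in which it equals $1$ and every other basis vector equals $0$; then for $w=\sum_j a_j\bsw_j$ to have no zero coordinate, the value of $w$ in the $j$-th pivot row---which is exactly $a_j$---must be nonzero for every $j$, giving at most $(b-1)^d$ such $w$. Your induction on $n$, fibering over the last coordinate and strengthening the claim to affine subspaces, is longer but entirely elementary and makes no appeal to canonical forms; the echelon-form proof selects $d$ pivot coordinates that determine the bound in one shot, whereas your induction discovers such a coordinate one at a time.

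One small correction to an aside: the parenthetical claim that ``translation does not increase this count'' is false. For instance $W=\{\bszero\}$ meets $(\Fb\setminus\{0\})^n$ in $0$ points, while a translate of $W$ by any vector with all coordinates nonzero meets it in $1$ point. This does not damage your argument, because you correctly fall back on proving the affine statement directly by the same hyperplane-splitting induction, which does go through---and in fact more simply than you seem to fear: on any fiber $\{x\in A : x_n=c\}$ the projection onto the first $n-1$ coordinates is automatically injective (two points of the fiber agreeing in the first $n-1$ coordinates agree everywhere), so the ``degenerate fiber'' case you set aside never arises. Simply delete the translation remark and let the affine induction carry the full weight.
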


\begin{proof}
As the result for $d = 0$ is trivial, let us consider the case $d \ge 1$.
Let $(\bsw_1, \dots, \bsw_d)$ be a reduced column echelon form of $W$.
Let us assume $a_j \in \Fb$ and
$
\sum_{j=1}^d a_j \bsw_j \in (\Fb\setminus\{0\})^n.
$
Then, by the definition of a reduced column echelon form,
each $a_j$ must be non-zero.
Thus the number of $a_j$'s does not exceed $(b-1)^d$.
\end{proof}

To apply the above lemma to show a bound on $|Q(u,\bsk+\bsone_u,P)|$,
it is required to know $\dim \Image (\tilde{\proj}_{u,\bsk+\bsone_u})$.
In fact, we have
\begin{align}
\dim \Image (\tilde{\proj}_{u,\bsk+\bsone_u})
& = 
\dim (V(u,\bsk+\bsone_u) \cap P^\perp) - \dim \Ker \tilde{\proj}_{\bsk+\bsone_u} \notag\\
& = \dim (V(u,\bsk+\bsone_u) \cap P^\perp) - \dim (V(u,\bsk) \cap P^\perp) \notag\\
& = (|\bsk| + |u| - \rank(C_{u,\bsk+\bsone_u})) - (|\bsk| - \rank(C_{u,\bsk})) \notag\\
& = |u| - \rank(C_{u,\bsk+\bsone_u}) + \rank(C_{u,\bsk}), \label{eq:im_rank}
\end{align}
where we used \eqref{eq:V_rank} in the third equality.
Thus it follows from Lemma~\ref{lem:bound_nonzero} and \eqref{eq:im_rank} that
\begin{equation}\label{eq:Q_rank}
|Q(u,\bsk+\bsone_u,P)| 
\le
\begin{cases}
0 & \text{if $\rank(C_{u,\bsk+\bsone_u})-\rank(C_{u,\bsk}) = |u|$,}\\
(b-1)^{|u|-\rank(C_{u,\bsk+\bsone_u})+\rank(C_{u,\bsk})} & \text{otherwise.}
\end{cases}
\end{equation}
By using \eqref{eq:tvalue_rank} again, we obtain the following bound.

\begin{lemma}\label{lem:bound_Q}
Let $P$ be a digital $(t,m,s)$-net in base $b$.
Then, for any $u \subseteq 1{:}s$ and $\bsk \in \NN_0^{|u|}$, we have 
\[
|Q(u,\bsk+\bsone_u,P)| 
\le 
\begin{cases}
0 & \text{if $|\bsk| \le m-t-|u|$},\\
(b-1)^{|u|+|\bsk|-(m-t)} & \text{if $m-t-|u| < |\bsk| \le m-t$},\\
(b-1)^{|u|} & \text{if $|\bsk| > m-t$}.\\
\end{cases}
\]
\end{lemma}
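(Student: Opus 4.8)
The plan is to prove Lemma~\ref{lem:bound_Q} by combining the rank bound \eqref{eq:Q_rank} with the characterization \eqref{eq:tvalue_rank} of the strict $t$-value, carefully tracking the difference $\rank(C_{u,\bsk+\bsone_u})-\rank(C_{u,\bsk})$. The key structural fact is that $C_{u,\bsk}$ is a submatrix of $C_{u,\bsk+\bsone_u}$ consisting of all but $|u|$ of its rows (we delete the $(k_j+1)$-th row of $C_{r_j}$ for each $j\in u$), so the two ranks differ by at most $|u|$; moreover the difference equals $|u|$ precisely when all $|u|$ deleted rows are linearly independent modulo the row span of $C_{u,\bsk}$, i.e.\ when $C_{u,\bsk+\bsone_u}$ is full row rank given $C_{u,\bsk}$ is.

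First I would handle the regime $|\bsk|\le m-t-|u|$. Here $|\bsk+\bsone_u| = |\bsk|+|u|\le m-t$, so by \eqref{eq:tvalue_rank} every $C_{u',\bsk'}$ with $|\bsk'|\le m-t$ is full row rank; in particular $\rank(C_{u,\bsk+\bsone_u}) = |\bsk|+|u|$ and $\rank(C_{u,\bsk}) = |\bsk|$, whence the difference is exactly $|u|$ and \eqref{eq:Q_rank} gives the bound $0$. Next, for the middle regime $m-t-|u|<|\bsk|\le m-t$, I still have $\rank(C_{u,\bsk}) = |\bsk|$ since $|\bsk|\le m-t$, while $\rank(C_{u,\bsk+\bsone_u})\le m-t$ trivially (it has $m$ columns, but more to the point we want an upper bound on the rank \emph{gap}) --- actually the cleaner route is: $\rank(C_{u,\bsk+\bsone_u})\le \min(|\bsk|+|u|, m)$ is too weak, so instead use that $C_{u,\bsk+\bsone_u}$ has $|\bsk|+|u|$ rows hence rank at most $|\bsk|+|u|$, giving gap at most $|u|$; but we need the gap to be at most $(m-t)-|\bsk|$ to land the exponent $|u|+|\bsk|-(m-t)$ in \eqref{eq:Q_rank}. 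That requires $\rank(C_{u,\bsk+\bsone_u})\ge |\bsk|+|u| - ((m-t)-|\bsk|)$? No --- rather, I claim $\rank(C_{u,\bsk+\bsone_u})\ge m-t$ whenever $|\bsk|+|u| = |\bsk+\bsone_u| > m-t$, which is exactly the contrapositive of \eqref{eq:tvalue_rank}: any $C_{u',\bsk'}$ that fails to be full row rank must have $|\bsk'|\ge m-t+1$, so its rank... hmm, \eqref{eq:tvalue_rank} only says the \emph{smallest} rank-deficient $|\bsk'|$ equals $m-t+1$, which gives $\rank(C_{u,\bsk+\bsone_u}) \ge m-t$ by a deletion/monotonicity argument (deleting rows drops rank by at most one each, and the ``first deficiency'' occurs at total degree $m-t+1$). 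Feeding $\rank(C_{u,\bsk+\bsone_u})\ge m-t$ and $\rank(C_{u,\bsk}) = |\bsk|$ into \eqref{eq:Q_rank} yields exponent $\le |u|-(m-t)+|\bsk|$, as desired, and this is $<|u|$ in this regime so we are in the second case of \eqref{eq:Q_rank}. Finally, for $|\bsk|>m-t$ I would use $\rank(C_{u,\bsk+\bsone_u})-\rank(C_{u,\bsk})\ge 0$ directly (ranks are nondecreasing under adding rows) so \eqref{eq:Q_rank} gives the crude bound $(b-1)^{|u|}$, which is the third case.

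The main obstacle is the middle regime: one must extract from \eqref{eq:tvalue_rank}, which only pins down where rank-deficiency \emph{first} appears as total degree increases, the quantitative statement that $\rank(C_{u,\bsk+\bsone_u})\ge m-t$ once the total degree exceeds $m-t$. The clean way is the monotonicity observation that if $C_{u',\bsk'}$ has total degree $|\bsk'|\ge m-t$, then it contains (as a row submatrix) some $C_{u'',\bsk''}$ with $|\bsk''| = m-t$, which is full row rank by \eqref{eq:tvalue_rank} (as $m-t<m-t+1$), hence $\rank(C_{u',\bsk'})\ge m-t$. I would state this as a short intermediate claim before invoking \eqref{eq:Q_rank}, and then the three cases fall out mechanically; the bookkeeping of which case of \eqref{eq:Q_rank} applies in each regime (checking the ``gap $=|u|$'' boundary) is routine once the rank inequalities are in hand.
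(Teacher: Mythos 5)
Your argument is correct and follows essentially the same route the paper takes: feed the consequences of \eqref{eq:tvalue_rank} (namely $\rank(C_{u,\bsk})=|\bsk|$ for $|\bsk|\le m-t$ and $\rank(C_{u,\bsk+\bsone_u})\ge m-t$ once $|\bsk+\bsone_u|>m-t$, which the paper states explicitly just above Lemma~\ref{lem:bound_V}) into \eqref{eq:Q_rank}. One tiny imprecision worth smoothing out: in the middle regime you need not be ``in the second case of \eqref{eq:Q_rank}''---the rank gap could still equal $|u|$, but then $|Q|\le 0$ which is trivially below the claimed bound $(b-1)^{|u|+|\bsk|-(m-t)}$ since the exponent is $\ge 1$ there, so the conclusion holds in either case.
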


We find that Lemma~\ref{lem:bound_A} is now apparent from
Lemmas~\ref{lem:decompose_A}, \ref{lem:bound_V} and \ref{lem:bound_Q}.

\begin{remark}
Let us focus on the case $b=2$. By combining our Lemmas~\ref{lem:gain_via_counting}, \ref{lem:decompose_A} and \eqref{eq:V_rank}, we have
\[ \Gamma_{u,\bsk}=2^{m-\rank (C_{u,\bsk})}|Q(u,\bsk+\bsone_u,P)|.\]
Here, as we have
$(\mathbb{F}_2 \setminus \{0\})^{|u|} = \{(1,\dots,1)\}$
and thus $|Q(u,\bsk+\bsone_u,P)|$ must be either $0$ or $1$, $\Gamma_{u,\bsk}$ must be either $0$ or $2^{m-\rank (C_{u,\bsk})}$. Together with \eqref{eq:Q_rank}, our dual net approach recovers \cite[Theorem~2 and Corollary~4]{PO21} exactly. Otherwise if $b>2$, non-zero $\Gamma_{u,\bsk}$'s are not necessarily powers of $b$. Example~\ref{example} supports our claim.
\end{remark}

\section{Maximal gain coefficients}\label{sec:maximal}

We conclude this paper with giving some results on the maximal gain coefficients. Recall that, as introduced in Section~\ref{sec:intro}, the original maximal gain coefficient is defined by
\[ \Gamma := \max_{\emptyset \neq u\subseteq \{1,\ldots,s\}}\max_{\bsk\in \NN_0^{|u|}}\Gamma_{u,\bsk}. \]
Moreover we can define the maximal gain coefficients for any non-empty subset $u\subseteq 1{:}s$ as
\begin{align*}
    \Gamma_u & := \max_{\bsk\in \NN_0^{|u|}} \Gamma_{u,\bsk},\quad \text{and}\\
    \Gamma^*_u & := \max_{\emptyset \neq v \subseteq u} \max_{\bsk\in \NN_0^{|v|}} \Gamma_{v,\bsk}.
\end{align*}
By definition, we have 
\[ \Gamma=\Gamma_{1{:}s}^*=\max_{\emptyset \neq u\subseteq 1{:}s}\Gamma_{u}. \]

\begin{remark}\label{rem:full_rank}
Here we note that, if $C_{u, \bsk+\bsone_u}$ is full row rank, then $C_{u, \bsk}$ is trivially full row rank, so that we have $|Q(u,\bsk+\bsone_u,P)|=0$ by \eqref{eq:Q_rank}. It follows from Lemmas~\ref{lem:decompose_A} and \ref{lem:gain_via_counting} that $\Gamma_{u,\bsk}=0$. 
\end{remark}

First let us introduce an upper bound on the gain coefficient
\begin{equation}
B(u,\bsk) := b^{m - \rank (C_{u,\bsk})} (b-1)^{\rank(C_{u,\bsk}) - \rank(C_{u,\bsk + \bsone_u})},
\end{equation}
for a non-empty $u \subseteq 1{:}s$ and $\bsk \in \NN_0^{|u|}$. In fact, it follows from Lemmas~\ref{lem:gain_via_counting} and \ref{lem:decompose_A}, \eqref{eq:V_rank} and \eqref{eq:Q_rank}, that
\begin{align*}
    \Gamma_{u,\bsk} & = \frac{b^{m-|\bsk|}}{(b-1)^{|u|}} |V(u,\bsk) \cap P^\perp| \times |Q(u,\bsk+\bsone_u,P)|\\
    & \le \frac{b^{m-|\bsk|}}{(b-1)^{|u|}} b^{|\bsk| - \rank (C_{u,\bsk})} \times (b-1)^{|u|-\rank(C_{u,\bsk+\bsone_u})+\rank(C_{u,\bsk})}\\
    & = b^{m - \rank (C_{u,\bsk})} (b-1)^{\rank(C_{u,\bsk}) - \rank(C_{u,\bsk + \bsone_u})}=B(u,\bsk).
\end{align*}

\begin{lemma}\label{lem:reduce}
Regarding the quantity $B(u,\bsk)$, the following properties hold true for any non-empty $u\subseteq 1{:}s$.
\begin{enumerate}
    \item For any $\bsk, \bsk'\in \NN_0^{|u|}$ with $\bsk\geq \bsk'$, we have $B(u,\bsk) \le B(u,\bsk')$.
    \item For any $v\subseteq u$ and $\bsk'\in \NN_0^{|v|}$, let $\bsk = (\bsk', \bszero_{u \setminus v}) \in \NN_0^{|u|}$. Then we have $B(u,\bsk) \le B(v,\bsk')$.
\end{enumerate}
\end{lemma}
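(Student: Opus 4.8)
\textbf{Proof proposal for Lemma~\ref{lem:reduce}.}

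The plan is to prove both monotonicity properties directly from the definition of $B(u,\bsk)$, tracking how the two rank quantities $\rank(C_{u,\bsk})$ and $\rank(C_{u,\bsk+\bsone_u})$ change under the respective operations, and then checking that the net effect on $B$ goes in the claimed direction. Throughout I would use the elementary fact that appending rows to a matrix can only increase (or keep) its rank, together with the observation that for a matrix $M$ with a row $r$ singled out, $\rank(M) \le \rank(M \text{ with } r \text{ appended}) \le \rank(M) + 1$.

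For property (i), by induction it suffices to treat the case where $\bsk$ and $\bsk'$ differ in a single coordinate $j_0 \in u$ by exactly $1$, say $k_{j_0} = k'_{j_0}+1$ and $k_j = k'_j$ otherwise. Write $a := \rank(C_{u,\bsk'})$, $a' := \rank(C_{u,\bsk'+\bsone_u})$, $c := \rank(C_{u,\bsk})$, $c' := \rank(C_{u,\bsk+\bsone_u})$. The matrix $C_{u,\bsk}$ is obtained from $C_{u,\bsk'}$ by inserting one extra row of $C_{r_{j_0}}$ (the $(k'_{j_0}+1)$-st), so $c \in \{a, a+1\}$; similarly $C_{u,\bsk+\bsone_u}$ is obtained from $C_{u,\bsk'+\bsone_u}$ by inserting one extra row, so $c' \in \{a', a'+1\}$. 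Moreover $c - c' \le a - a'$: indeed $C_{u,\bsk+\bsone_u}$ adds to $C_{u,\bsk}$ the $|u|$ rows indexed $k_j+1$, and $C_{u,\bsk'+\bsone_u}$ adds to $C_{u,\bsk'}$ the $|u|$ rows indexed $k'_j+1$; in the coordinate $j_0$ these are the $(k'_{j_0}+2)$-nd versus the $(k'_{j_0}+1)$-st row respectively. Writing $B(u,\bsk) = b^{m-c}(b-1)^{c-c'}$ and $B(u,\bsk') = b^{m-a}(b-1)^{a-a'}$, and using $b \ge 2$ so that $b \ge b-1 \ge 1$, one checks case by case on $(c-a, c'-a') \in \{0,1\}^2$ that $b^{-(c-a)}(b-1)^{(c-c')-(a-a')} \le 1$, which is exactly $B(u,\bsk) \le B(u,\bsk')$. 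The only case needing the inequality $c-c' \le a-a'$ is $(c-a,c'-a')=(0,1)$, where the factor is $(b-1)^{-1} \le 1$; the case $(1,0)$ gives $b^{-1}(b-1)^{-1}\le 1$; the case $(1,1)$ gives $b^{-1}\le 1$; and $(0,0)$ is trivial.

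For property (ii), with $\bsk = (\bsk', \bszero_{u\setminus v})$, the zero coordinates contribute no rows, so $C_{u,\bsk} = C_{v,\bsk'}$ exactly, giving $\rank(C_{u,\bsk}) = \rank(C_{v,\bsk'})$. On the other hand $C_{u,\bsk+\bsone_u}$ is obtained from $C_{v,\bsk'+\bsone_v}$ by appending, for each $j \in u \setminus v$, the first row of $C_{r_j}$; hence $\rank(C_{u,\bsk+\bsone_u}) \ge \rank(C_{v,\bsk'+\bsone_v})$. Since the $b$-exponent $m - \rank(C_{u,\bsk}) = m - \rank(C_{v,\bsk'})$ is unchanged while the $(b-1)$-exponent $\rank(C_{u,\bsk}) - \rank(C_{u,\bsk+\bsone_u}) \le \rank(C_{v,\bsk'}) - \rank(C_{v,\bsk'+\bsone_v})$ can only decrease, and $b-1 \ge 1$, we get $B(u,\bsk) \le B(v,\bsk')$.

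The routine part is the finite case analysis in (i); the only genuine point to get right is the comparison $\rank(C_{u,\bsk+\bsone_u}) - \rank(C_{u,\bsk}) \le \rank(C_{u,\bsk'+\bsone_u}) - \rank(C_{u,\bsk'})$ used there, i.e. that raising $\bsk$ by one in coordinate $j_0$ cannot increase the "rank jump" caused by adding the $\bsone_u$-shift rows. This is the main obstacle, and I expect to argue it by viewing both $C_{u,\bsk+\bsone_u}$ and $C_{u,\bsk'+\bsone_u}$ as the matrix $C_{u,\bsk}$ with a fixed extra row of $C_{r_{j_0}}$ possibly removed from one place and a later/earlier row adjoined, so that the dimension of the span added by the $\bsone_u$ rows on top of $C_{u,\bsk'}$ is at least that added on top of the larger $C_{u,\bsk}$; a clean way is to note $C_{u,\bsk'+\bsone_v \text{-type}}$ nesting and use submodularity of $\dim(\spn(\cdot))$ with respect to adding rows. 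Once that inequality is in hand, everything else is bookkeeping.
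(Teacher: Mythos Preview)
Your argument for item~(ii) is correct and is exactly the paper's argument: $C_{u,\bsk}=C_{v,\bsk'}$ because the zero coordinates contribute no rows, while $C_{u,\bsk+\bsone_u}$ is $C_{v,\bsk'+\bsone_v}$ with extra rows appended.

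For item~(i), your case analysis is actually already complete and correct (with one arithmetic slip: in case $(c-a,c'-a')=(1,0)$ the ratio is $b^{-1}(b-1)^{+1}=(b-1)/b$, not $b^{-1}(b-1)^{-1}$; still $\le 1$). The ``main obstacle'' you flag, namely the inequality
\[
\rank(C_{u,\bsk+\bsone_u})-\rank(C_{u,\bsk})\ \le\ \rank(C_{u,\bsk'+\bsone_u})-\rank(C_{u,\bsk'}),
\]
is both unnecessary and false in general. It is unnecessary because, writing $\alpha=c-a$ and $\beta=c'-a'$, the ratio equals $b^{-\alpha}(b-1)^{\alpha-\beta}$; the four values $(\alpha,\beta)\in\{0,1\}^2$ give $1$, $(b-1)^{-1}$, $(b-1)/b$, $b^{-1}$, all $\le 1$, with no side condition required. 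It is false in general: already for $|u|=1$, take $C_1$ with rows $r_1,r_2,r_3$ where $r_1,r_2$ are independent and $r_3=r_1$; with $\bsk'=(1)$, $\bsk=(2)$ one gets $(a,a',c,c')=(1,2,2,2)$, so $c-c'=0>a-a'=-1$. (You also state the inequality once in each direction, which is a symptom of the same confusion.)

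The paper sidesteps all of this with a one-line computation that works for arbitrary $\bsk\ge\bsk'$ without induction or cases:
\[
\frac{B(u,\bsk)}{B(u,\bsk')}
=\left(\frac{b-1}{b}\right)^{\rank C_{u,\bsk}-\rank C_{u,\bsk'}}
\left(\frac{1}{b-1}\right)^{\rank C_{u,\bsk+\bsone_u}-\rank C_{u,\bsk'+\bsone_u}}\le 1,
\]
since both exponents are non-negative by rank monotonicity under row-appending. So your proof is salvageable by simply deleting the ``obstacle'' paragraph, but the paper's formulation is cleaner and worth adopting.
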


\begin{proof}
Let us consider the first item. Since $\bsk\geq \bsk'$ is assumed, it follows that
$\rank (C_{u,\bsk'}) \le \rank (C_{u,\bsk})$ and $\rank (C_{u,\bsk' + \bsone_u}) \le \rank (C_{u,\bsk + \bsone_u})$. Thus we have
\[
\frac{B(u,\bsk)}{B(u,\bsk')}
=\left(\frac{b-1}{b}\right)^{\rank (C_{u,\bsk}) - \rank (C_{u,\bsk'})} \left(\frac{1}{b-1}\right)^{\rank (C_{u,\bsk+\bsone_u}) - \rank (C_{u,\bsk'+\bsone_u})}
\le 1,
\]
which proves the statement.

Let us move on to the second item. It follows from the given form of $\bsk$ that
$\rank (C_{v,\bsk'}) = \rank (C_{u,\bsk})$ and $\rank (C_{v,\bsk' + \bsone_v}) \le \rank (C_{u,\bsk + \bsone_u})$, which leads to
\[
\frac{B(u,\bsk)}{B(v,\bsk')}
=\left(\frac{b-1}{b}\right)^{\rank (C_{u,\bsk}) - \rank (C_{v,\bsk'})} \left(\frac{1}{b-1}\right)^{\rank (C_{u,\bsk+\bsone_u}) - \rank (C_{v,\bsk'+\bsone_v})}
\le 1.
\]
Thus we are done.
\end{proof}

As we already know that $\Gamma_{u,\bsk}\leq B(u,\bsk)$ for any $u$ and $\bsk$, it obviously holds that
\[ \Gamma_u\leq \max_{\bsk\in \NN_0^{|u|}}B(u,\bsk)\quad \text{and}\quad \Gamma_u^*\leq \max_{\emptyset \neq v \subseteq u} \max_{\bsk\in \NN_0^{|v|}} B(v,\bsk),\]
for any non-empty $u$. In what follows, we give the maximums over reduced sets, one of which bounds $\Gamma_u$ from above and the other of which is exactly equal to $\Gamma_u^*$.
Define
\begin{equation*}
E_1(u) := \left\{
\bsk \in \NN_0^{|u|} \;\middle|\;
\begin{aligned}
&\text{$C_{u,\bsk+\bsone_u}$ is not full row rank,}\\
&\text{$C_{u,\bsk'+\bsone_u}$ is full row rank for any $\bsk' \le \bsk$ with $\bsk' \neq \bsk$}
\end{aligned}
\right\},\\
\end{equation*}
and
\begin{equation}\label{eq:extremal_condition}
E_2(u) := \left\{
\bsk \in \NN_0^{|u|} \;\middle|\;
\begin{aligned}
&\text{$C_{u,\bsk+\bsone_u}$ is not full row rank,}\\
&\text{$C_{u,\bsk+\bsone_v}$ is full row rank for any $v \subsetneq u$}
\end{aligned}
\right\},
\end{equation}
for $u\subseteq 1{:}s$.
Note that we have $E_2(u) \subseteq E_1(u)$.

The following lemma computes $B(u,\bsk)$ for the cases $\bsk\in E_1(u)$ and $\bsk\in E_2(u)$, respectively.

\begin{lemma}\label{lem:B_for_E12}
For any non-empty $u \subseteq 1{:}s$, the following holds.
\begin{enumerate}
    \item For any $\bszero_u \neq \bsk \in E_1(u)$, we have
    \begin{equation*}
    B(u,\bsk) = \frac{b^{m-|\bsk|}}{(b-1)^{|u|-1}}.
    \end{equation*}
    \item For any $\bsk \in E_2(u)$, we have
    \begin{equation*}
    \Gamma_{u,\bsk}=B(u,\bsk) = \frac{b^{m-|\bsk|}}{(b-1)^{|u|-1}}.
    \end{equation*}
\end{enumerate}
\end{lemma}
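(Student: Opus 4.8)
The plan is to compute $\rank(C_{u,\bsk})$ and $\rank(C_{u,\bsk+\bsone_u})$ explicitly under the defining hypotheses of $E_1(u)$ and $E_2(u)$, then substitute into the formula $B(u,\bsk) = b^{m-\rank(C_{u,\bsk})}(b-1)^{\rank(C_{u,\bsk})-\rank(C_{u,\bsk+\bsone_u})}$ and also verify that the intermediate inequalities used to obtain $\Gamma_{u,\bsk} \le B(u,\bsk)$ are equalities in the $E_2(u)$ case.

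For the first item, suppose $\bszero_u \neq \bsk \in E_1(u)$. Pick any index $j \in u$ with $k_j \ge 1$ and let $\bsk' = \bsk - \bsone_{\{j\}} \le \bsk$ with $\bsk' \neq \bsk$; by the defining condition of $E_1(u)$, the matrix $C_{u,\bsk'+\bsone_u} = C_{u,\bsk+\bsone_u - \bsone_{\{j\}}}$ is full row rank, i.e.\ has rank $|\bsk| + |u| - 1$. Since $C_{u,\bsk+\bsone_u}$ is obtained from this by adjoining one more row and is \emph{not} full row rank, its rank is also $|\bsk|+|u|-1$. On the other hand, $C_{u,\bsk}$ is a submatrix of the full-row-rank matrix $C_{u,\bsk'+\bsone_u}$ consisting of some of its rows (namely the first $k_i$ rows of each $C_{r_i}$), and in fact $\bsk \le \bsk'+\bsone_u$ componentwise; more directly, $C_{u,\bsk}$ has $|\bsk|$ rows and $C_{u,\bsk+\bsone_u}$ has the full-rank property up to the $(|u|-1)$ deficit, so one argues $\rank(C_{u,\bsk}) = |\bsk|$ — concretely, $C_{u,\bsk}$ is a row-submatrix of $C_{u,\bsk + \bsone_u - \bsone_{\{j\}}}$ for the chosen $j$ only if $k_j \ge 1$, and since that larger matrix is full row rank, every one of its row-submatrices (obtained by deleting rows, as $C_{u,\bsk}$ is) has rank equal to its number of rows, hence $\rank(C_{u,\bsk}) = |\bsk|$. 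Plugging $\rank(C_{u,\bsk}) = |\bsk|$ and $\rank(C_{u,\bsk+\bsone_u}) = |\bsk| + |u| - 1$ into the formula for $B(u,\bsk)$ gives $B(u,\bsk) = b^{m-|\bsk|}(b-1)^{|\bsk| - (|\bsk|+|u|-1)} = b^{m-|\bsk|}(b-1)^{1-|u|}$, as claimed.

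For the second item, first note $E_2(u) \subseteq E_1(u)$, so the value of $B(u,\bsk)$ is already determined by item 1 once we check that $\bsk \in E_2(u)$ forces either $\bsk \neq \bszero_u$ or, if $\bsk = \bszero_u$, that the formula still reads correctly — one should handle the edge case $\bsk = \bszero_u$ directly, where $C_{u,\bsone_u}$ not full row rank with all proper sub-selections full row rank pins down $\rank(C_{u,\bszero_u}) = 0$ and $\rank(C_{u,\bsone_u}) = |u|-1$, again giving $B(u,\bszero_u) = b^m(b-1)^{1-|u|}$. The new content is the equality $\Gamma_{u,\bsk} = B(u,\bsk)$. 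For this I would retrace the chain $\Gamma_{u,\bsk} = \frac{b^{m-|\bsk|}}{(b-1)^{|u|}}|V(u,\bsk)\cap P^\perp| \cdot |Q(u,\bsk+\bsone_u,P)|$ from Lemmas~\ref{lem:gain_via_counting} and \ref{lem:decompose_A} and show both factors attain their upper bounds. Since $\rank(C_{u,\bsk}) = |\bsk|$, equation \eqref{eq:V_rank} gives $\dim(V(u,\bsk)\cap P^\perp) = 0$, so $|V(u,\bsk)\cap P^\perp| = 1$ exactly. For the $Q$-factor, by \eqref{eq:im_rank} we have $\dim \Image(\tilde{\proj}_{u,\bsk+\bsone_u}) = |u| - \rank(C_{u,\bsk+\bsone_u}) + \rank(C_{u,\bsk}) = |u| - (|\bsk|+|u|-1) + |\bsk| = 1$, so $\Image(\tilde{\proj}_{u,\bsk+\bsone_u})$ is a $1$-dimensional subspace of $\Fb^{|u|}$; I must show it is spanned by a vector with all coordinates nonzero, so that $|Q(u,\bsk+\bsone_u,P)| = b-1$. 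This is where the $E_2(u)$ hypothesis does real work: if some coordinate $j \in u$ of every vector in the image vanished, then (tracking through the projection definition) the $j$-th component of $\bsk+\bsone_u$ could be lowered to $k_j$ without losing any points of $V \cap P^\perp$ mapping nontrivially, which would force $C_{u,\bsk+\bsone_{u\setminus\{j\}}}$ to already fail full row rank — contradicting the defining property of $E_2(u)$ that $C_{u,\bsk+\bsone_v}$ is full row rank for every $v \subsetneq u$ (take $v = u \setminus \{j\}$). Making this last implication precise — relating "coordinate $j$ of the image is identically zero" to "$C_{u,\bsk+\bsone_{u\setminus\{j\}}}$ is not full row rank" via the rank identity \eqref{eq:im_rank} applied with $u$ replaced by the nested selection — is the main obstacle; the cleanest route is to observe that the $j$-th coordinate of the image being zero is equivalent to $\rank(C_{u,\bsk+\bsone_{u\setminus\{j\}}}) = \rank(C_{u,\bsk+\bsone_u})$, and combine this with $\rank(C_{u,\bsk+\bsone_u}) = |\bsk|+|u|-1 > |\bsk|+|u\setminus\{j\}| - 1 \ge \rank(C_{u,\bsk+\bsone_{u\setminus\{j\}}})$ when $C_{u,\bsk+\bsone_{u\setminus\{j\}}}$ is full row rank, yielding the contradiction. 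Once $|Q(u,\bsk+\bsone_u,P)| = b-1$ is established, we get $\Gamma_{u,\bsk} = \frac{b^{m-|\bsk|}}{(b-1)^{|u|}} \cdot 1 \cdot (b-1) = \frac{b^{m-|\bsk|}}{(b-1)^{|u|-1}} = B(u,\bsk)$, completing the proof.
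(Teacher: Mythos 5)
Your overall plan is right and item~1 is fine, but the formalization of the $|Q(u,\bsk+\bsone_u,P)|=b-1$ step in item~2 has a genuine error. You assert that ``the $j$-th coordinate of the image being zero is equivalent to $\rank(C_{u,\bsk+\bsone_{u\setminus\{j\}}})=\rank(C_{u,\bsk+\bsone_u})$,'' but this equivalence is backwards. Set $W_1 := V(u,\bsk+\bsone_{u\setminus\{j\}})\cap P^\perp \subseteq W_2 := V(u,\bsk+\bsone_u)\cap P^\perp$. The $j$-th coordinate of every image vector is zero iff $W_1=W_2$; since $\dim W_1 = (|\bsk|+|u|-1)-\rank(C_{u,\bsk+\bsone_{u\setminus\{j\}}})$ and $\dim W_2 = (|\bsk|+|u|)-\rank(C_{u,\bsk+\bsone_u})$ by \eqref{eq:V_rank}, equality of dimensions means the two ranks \emph{differ by exactly one}, not that they are equal. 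Your subsequent chain ``$\rank(C_{u,\bsk+\bsone_u}) = |\bsk|+|u|-1 > |\bsk|+|u\setminus\{j\}|-1 \ge \rank(C_{u,\bsk+\bsone_{u\setminus\{j\}}})$ when $C_{u,\bsk+\bsone_{u\setminus\{j\}}}$ is full row rank'' is also false: full row rank gives $\rank(C_{u,\bsk+\bsone_{u\setminus\{j\}}}) = |\bsk|+|u\setminus\{j\}| = |\bsk|+|u|-1$, violating the middle inequality. The fix is short and confirms the informal sketch you gave just before the ``cleanest route'' sentence: by the $E_2(u)$ hypothesis with $v = u\setminus\{j\}$, the matrix $C_{u,\bsk+\bsone_{u\setminus\{j\}}}$ is full row rank, so $\rank(C_{u,\bsk+\bsone_{u\setminus\{j\}}}) = |\bsk|+|u|-1 = \rank(C_{u,\bsk+\bsone_u})$; equal ranks force $W_1\subsetneq W_2$, so the $j$-th image coordinate is \emph{not} always zero. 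As the image is one-dimensional, this means its spanning vector has nonzero $j$-th entry for every $j\in u$, whence $|Q(u,\bsk+\bsone_u,P)|=b-1$.

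For comparison, the paper's proof of item~2 sidesteps the $V$--$Q$ decomposition and computes $|A(u,\bsk+\bsone_u)\cap P^\perp|$ directly: the same rank facts yield $\dim(V(u,\bsk+\bsone_v)\cap P^\perp)=\chi(v=u)$, so by the inclusion--exclusion identity \eqref{eq:in_ex} one gets $|A(u,\bsk+\bsone_u)\cap P^\perp| = b + \sum_{v\subsetneq u}(-1)^{|u|-|v|} = b-1$, and Lemma~\ref{lem:gain_via_counting} finishes. That route never needs to discuss whether a one-dimensional image is spanned by a coordinate-wise nonzero vector, which is the step where your write-up went astray.
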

\begin{proof}
For the first item, for any $\bszero_u \neq \bsk \in E_1(u)$, there exists $\bsk' \le \bsk$ such that $|\bsk'| = |\bsk|-1$ and $C_{u,\bsk'+\bsone_u}$ is full row rank, and we have $\rank (C_{u,\bsk+\bsone_u}) = \rank (C_{u,\bsk'+\bsone_u}) = |\bsk|+|u|-1$.
This way we obtain
\begin{equation}\label{eq:B_for_E12}
B(u,\bsk)
= b^{m - \rank (C_{u,\bsk})} (b-1)^{\rank (C_{u,\bsk}) - \rank (C_{u,\bsk + \bsone_u})}
= \frac{b^{m - |\bsk|}}{(b-1)^{|u|-1}}.
\end{equation}
Thus we have proved the first item.

For the second item, it follows from the definition of $E_2(u)$ given in \eqref{eq:extremal_condition} that 
$\rank (C_{u,\bsk + \bsone_u}) = |\bsk|+|u|-1$
and
$\rank (C_{u,\bsk + \bsone_v}) = |\bsk|+|v|$
for any $v \subsetneq u$.
Thus \eqref{eq:B_for_E12} is still valid in this case,
and we get the second equality.
Furthermore, by using \eqref{eq:V_rank}, we have
\[
\dim (V(\bsk + \bsone_v) \cap P^\perp)
= |\bsk + \bsone_v| - \rank (C_{\bsk + \bsone_v})
= \chi(v=u),
\]
for any $v \subseteq u$. The inclusion-exclusion principle \eqref{eq:in_ex} leads to
\[
|A(u,\bsk+\bsone_u) \cap P^\perp| = 
\sum_{v \subseteq u} (-1)^{|u|-|v|} |V(u,\bsk+\bsone_v) \cap P^\perp|
= b + \sum_{v \subsetneq u} (-1)^{|u|-|v|} 
= b-1.
\]
By using Lemma~\ref{lem:gain_via_counting}, we obtain
\[ \Gamma_{u,\bsk}= \frac{b^{m-|\bsk|}}{(b-1)^{|u|-1}}. \]
This proves our claim.
\end{proof}

We now evaluate the maximal gain coefficients.

\begin{lemma}\label{lem:gain_bound}
For any non-empty subset $u\subseteq 1{:}s$, we have
\begin{align*}
\Gamma_u &\le \max_{\bsk \in E_1(u)} B(u,\bsk),\\
\Gamma_u^* & = \max_{\emptyset \neq v\subseteq u} \max_{\bsk \in E_1(v)} B(v,\bsk)
= \max_{\emptyset \neq v\subseteq u} \max_{\bsk \in E_2(v)} B(v,\bsk).
\end{align*}

\end{lemma}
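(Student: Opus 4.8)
The plan is to prove the three relations in Lemma~\ref{lem:gain_bound} by reducing the relevant maxima to the extremal index sets $E_1$ and $E_2$, using the monotonicity properties of $B(u,\bsk)$ established in Lemma~\ref{lem:reduce} together with the values of $B$ on $E_1(u)$ and $E_2(u)$ computed in Lemma~\ref{lem:B_for_E12}.

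For the first inequality $\Gamma_u \le \max_{\bsk \in E_1(u)} B(u,\bsk)$, I would start from $\Gamma_{u,\bsk} \le B(u,\bsk)$ for every $\bsk \in \NN_0^{|u|}$, so it suffices to show $\max_{\bsk} B(u,\bsk) = \max_{\bsk \in E_1(u)} B(u,\bsk)$. Fix an arbitrary $\bsk$. If $C_{u,\bsk+\bsone_u}$ is full row rank, then by Remark~\ref{rem:full_rank} we have $\Gamma_{u,\bsk}=0$, so such $\bsk$ contribute nothing to $\Gamma_u$ and may be ignored. Otherwise $C_{u,\bsk+\bsone_u}$ is not full row rank; among all $\bsk'' \le \bsk$ for which $C_{u,\bsk''+\bsone_u}$ is not full row rank, pick one of minimal $|\bsk''|$ (or minimal under $\le$). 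By minimality, every $\bsk' \le \bsk''$ with $\bsk' \neq \bsk''$ has $C_{u,\bsk'+\bsone_u}$ full row rank, so $\bsk'' \in E_1(u)$; and by item 1 of Lemma~\ref{lem:reduce}, since $\bsk \ge \bsk''$, we get $B(u,\bsk) \le B(u,\bsk'')$. This shows every nonzero-contributing $\bsk$ is dominated by some element of $E_1(u)$, giving the claimed bound.

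For $\Gamma_u^*$, note $\Gamma_u^* = \max_{\emptyset \neq v \subseteq u} \Gamma_v \le \max_{\emptyset \neq v\subseteq u}\max_{\bsk \in E_1(v)} B(v,\bsk)$ by the first part applied to each $v$. For the reverse inequality and the identification with the $E_2$-maximum, I would argue: by item 2 of Lemma~\ref{lem:B_for_E12}, for $\bsk \in E_2(v)$ we have $\Gamma_{v,\bsk} = B(v,\bsk)$ exactly, hence $\Gamma_u^* \ge \max_{\emptyset\neq v\subseteq u}\max_{\bsk\in E_2(v)} B(v,\bsk)$. It then remains to show $\max_{\emptyset\neq v\subseteq u}\max_{\bsk\in E_1(v)}B(v,\bsk) \le \max_{\emptyset\neq v\subseteq u}\max_{\bsk\in E_2(v)}B(v,\bsk)$, which sandwiches all three quantities together. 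Given $v$ and $\bsk \in E_1(v)$, if $\bsk \in E_2(v)$ we are done; otherwise there is $w \subsetneq v$ with $C_{v,\bsk+\bsone_w}$ not full row rank. Here I would use the second item of Lemma~\ref{lem:reduce}: restricting to the coordinates of $w$ and setting $\bsk' = \bsk|_w \in \NN_0^{|w|}$ (noting $C_{w,\bsk'+\bsone_w} = C_{v,\bsk+\bsone_w}$ is not full row rank, while full-row-rankness on all proper subsets of $w$ follows from that on proper subsets of $v$), one lands on a smaller index set $w$ with $\bsk' \in E_1(w)$ and $B(v,\bsk) \le B(w,\bsk')$; iterating this descent terminates at some pair lying in $E_2$, because a pair in $E_1(w)$ with no witnessing proper subset is by definition in $E_2(w)$. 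Hence $\max_{E_1}$ over all subsets equals $\max_{E_2}$ over all subsets, completing the proof.

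The main obstacle I anticipate is the descent step showing $E_1$-maxima are dominated by $E_2$-maxima: one must be careful that passing from $(v,\bsk)$ to $(w, \bsk|_w)$ genuinely produces an element of $E_1(w)$ (the full-row-rank condition on proper sub-tuples must be checked to survive the restriction, using that $\bsk|_w + \bsone_{v'} $ for $v' \subseteq w$ matches a sub-pattern of $\bsk + \bsone_{v'}$), and that the process strictly decreases $|w|$ so it terminates. Handling the boundary case $\bsk = \bszero_u$ separately (where $E_1(u)$ may behave degenerately and $B(u,\bszero_u) = b^m$ when $C_{u,\bsone_u}$ is not full row rank, i.e. $t \ge m$) is a minor but necessary bookkeeping point.
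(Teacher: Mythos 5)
The first half of your argument (the bound on $\Gamma_u$) is correct and matches the paper exactly: take a minimal $\bsk''\le\bsk$ with $C_{u,\bsk''+\bsone_u}$ not of full row rank, note it lies in $E_1(u)$, and invoke item~1 of Lemma~\ref{lem:reduce}. The sandwich structure for $\Gamma_u^*$ is also the right idea.

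The descent step showing $\max_{E_1}\le\max_{E_2}$, however, has a genuine gap. You take $w\subsetneq v$ to be any proper subset witnessing $\bsk\notin E_2(v)$, set $\bsk'=\bsk|_w$, and claim $C_{w,\bsk'+\bsone_w}=C_{v,\bsk+\bsone_w}$. This identity is false whenever some $j\in v\setminus w$ has $k_j>0$: the matrix $C_{v,\bsk+\bsone_w}$ still contains the first $k_j$ rows of $C_j$ for such $j$, while $C_{w,\bsk'+\bsone_w}$ does not. As a consequence, (a) you cannot conclude that $C_{w,\bsk'+\bsone_w}$ fails to be full row rank --- it is merely a row-submatrix of a non--full-row-rank matrix, so it can perfectly well be of full row rank; and (b) item~2 of Lemma~\ref{lem:reduce} does not apply, since that item requires the discarded components of $\bsk$ (those indexed by $v\setminus w$) to be zero. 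So the assertion $B(v,\bsk)\le B(w,\bsk')$ is unjustified and the iteration is not well-defined.

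The paper's proof avoids this by picking the index set carefully before descending: it sets $w:=\{j\in v\mid k_j>0\}$, so every coordinate that will be dropped has $k_j=0$, and then adjoins a minimal $z\subseteq v\setminus w$ so that $C_{v,\bsk+\bsone_{w\cup z}}$ is not full row rank. With $\bsk^*=(\bsk',\bszero_z)$ one then has the genuine matrix identity $C_{v,\bsk+\bsone_{w\cup z}}=C_{w\cup z,\,\bsk^*+\bsone_{w\cup z}}$, item~2 of Lemma~\ref{lem:reduce} applies because all discarded coordinates carry zeros, and a short check (minimality of $z$ for $\zeta\subsetneq z$, and $\bsk\in E_1(v)$ for $\alpha\subsetneq w$) shows $\bsk^*\in E_2(w\cup z)$ directly --- a one-step reduction with no iteration needed. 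Your proposal would need to be repaired along these lines: choose $w$ by the support of $\bsk$ rather than by a full-row-rank witness, and handle the remaining zero coordinates via a minimal augmenting set.
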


\begin{proof}
First let us consider bounding $\Gamma_u$. As mentioned in Remark~\ref{rem:full_rank}, we have $\Gamma_{u,\bsk}=0$ if $C_{u, \bsk+\bsone_u}$ is full row rank, so that is suffices to prove 
\[ \Gamma_{u,\bsk}\leq \max_{\bsk \in E_1(u)} B(u,\bsk), \]
for all $\bsk\in \NN_0^{|u|}$ such that $C_{u, \bsk+\bsone_u}$ is not full row rank. For such $\bsk$, take any $\bsk^*$ which satisfies
\[
|\bsk^*| = \min \{|\bsk'| \mid \text{$\bsk' \le \bsk$ and $C_{u, \bsk'+\bsone_u}$ is not full row rank}\}.
\]
Because of the minimality of $\bsk^*$, it holds that $\bsk^* \in E_1(u)$.
Then it follows from the first item of Lemma~\ref{lem:reduce} that
\begin{align*}
    \Gamma_{u,\bsk} \leq B(u,\bsk)\leq B(u,\bsk^*)\leq \max_{\bsk \in E_1(u)} B(u,\bsk),
\end{align*} 
which proves our claim.

Let us move on to computing $\Gamma_u^*$.
From the above argument on $\Gamma_u$ we have
\[
\Gamma_u^* = \max_{\emptyset \neq v \subseteq u} \Gamma_v
\le \max_{\emptyset \neq v \subseteq u} \max_{\bsk \in E_1(v)} B(v,\bsk)
\]
and from the second item of Lemma~\ref{lem:B_for_E12} we have
\[
\max_{\emptyset \neq v\subseteq u} \max_{\bsk \in E_2(v)} B(v,\bsk)
= \max_{\emptyset \neq v\subseteq u} \max_{\bsk \in E_2(v)} \Gamma(v,\bsk)
\le \max_{\emptyset \neq v\subseteq u} \max_{\bsk \in \NN_0^{|v|}} \Gamma(v,\bsk)
= \Gamma_u^*.
\]
Hence it suffices to prove
\[ B(v,\bsk)\leq \max_{\emptyset \neq v\subseteq u} \max_{\bsk \in E_2(v)} B(v,\bsk), \]
for all $\emptyset \neq v\subseteq u$ and $\bsk\in E_1(v)$. Given such $\emptyset \neq v\subseteq u$ and $\bsk\in E_1(v)$, let us write $w := \{j \in v \mid k_j > 0\}$ and put $\bsk' := (k_j)_{j \in w}$. Take any $z \subseteq v\setminus w$ which satisfies
\[
|z| = \min \{|z| \mid \text{$z \subseteq v\setminus w$ and $C_{v, \bsk + \bsone_{w\cup z}}$ is not full row rank}\},
\]
and let $\bsk^* := (\bsk', \bszero_z) \in \NN_0^{|w \cup z|}$ so that $C_{v, (\bsk^*,\bszero_{v\setminus (w\cup z)}) + \bsone_{w\cup z}}$ is not full row rank.

We now show that $\bsk^* \in E_2(w \cup z)$.
It suffices to show that 
(i) $C_{w \cup z, \bsk^{*} + \bsone_{w \cup \zeta}}$ is full row rank for any $\zeta \subsetneq z$, and that
(ii) $C_{w \cup z, \bsk^{*} + \bsone_{\alpha \cup z}}$ is full row rank for any $\alpha \subsetneq w$.
(i) follows from the minimality of $z$.
We now show (ii). As $\bsk$ is assumed to belong to $E_1(v)$,
$C_{v, \bsk^{*} - \bsone_{w \setminus \alpha} + \bsone_v} = C_{v, \bsk^{*} + \bsone_{\alpha \cup z \cup (v \setminus (z \cup w))}}$ is full row rank.
Thus (ii) holds.
As we now see that $\bsk^* \in E_2(w \cup z)$, it follows from the second item of Lemma~\ref{lem:reduce} that
\begin{align*}
    B(v,\bsk)\leq B(w\cup z,\bsk^*)\leq \max_{\bsk^* \in E_2(w\cup z)} B(w\cup z,\bsk^*)\leq \max_{\emptyset \neq v\subseteq u} \max_{\bsk \in E_2(v)} B(v,\bsk),
\end{align*} 
which proves our claim.
\end{proof}

As in \cite[Section~5]{PO21}, we define
\[ t^*_u := m+1 - \min_{\bsk \in \NN^{|u|}} \left\{|\bsk| \mid \text{$C_{u,\bsk}$ is not full row rank}\right\}
= m+1-|u|-\min_{\bsk \in E_1(u)} |\bsk|.\]
Using Lemmas~\ref{lem:B_for_E12} and \ref{lem:gain_bound}, as a generalization of \cite[Theorem~3 and Corollary~4]{PO21} to prime power base, we arrive at the following result on the maximal gain coefficients.
\begin{theorem}\label{thm:maximal_gain_bound}
For any non-empty $u \subseteq 1{:}s$, where $C_{u,\bsone}$ is full row rank, we have
\[ \Gamma_u \le \frac{b^{t^*_u+|u|-1}}{(b-1)^{|u|-1}}\quad \text{and}\quad \Gamma_u^* = \max_{\emptyset \neq v\subseteq u} \frac{b^{t^*_v+|v|-1}}{(b-1)^{|v|-1}}.\]
In particular, if $C_{1{:}s,\bsone}$ is full row rank, we have
\[ \Gamma = \max_{\emptyset \neq u\subseteq 1{:}s}\frac{b^{t^*_u+|u|-1}}{(b-1)^{|u|-1}}.\]
\end{theorem}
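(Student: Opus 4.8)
The plan is to derive Theorem~\ref{thm:maximal_gain_bound} as an almost immediate corollary of Lemmas~\ref{lem:B_for_E12} and \ref{lem:gain_bound}, reducing everything to the combinatorial identity
\[
\min_{\bsk \in E_1(u)} |\bsk| = (m+1-|u|) - t^*_u,
\]
which is exactly the rewriting of the definition of $t^*_u$ recorded just before the theorem statement. First I would fix a non-empty $u\subseteq 1{:}s$ with $C_{u,\bsone}$ full row rank; this hypothesis guarantees that $E_1(u)$ (and, after restriction, $E_1(v)$ for all $v$ with the analogous property) is non-empty and does not contain $\bszero_u$, so the first item of Lemma~\ref{lem:B_for_E12} applies verbatim to every $\bsk$ appearing in the maxima of Lemma~\ref{lem:gain_bound}. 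Substituting $B(u,\bsk)=b^{m-|\bsk|}/(b-1)^{|u|-1}$ into $\Gamma_u\le \max_{\bsk\in E_1(u)}B(u,\bsk)$ turns the maximization over $E_1(u)$ into a \emph{minimization} of $|\bsk|$ over $E_1(u)$, because $b^{m-|\bsk|}$ is decreasing in $|\bsk|$ and the denominator is constant on $E_1(u)$. Plugging in the displayed identity gives
\[
\Gamma_u \le \frac{b^{\,m-((m+1-|u|)-t^*_u)}}{(b-1)^{|u|-1}} = \frac{b^{t^*_u+|u|-1}}{(b-1)^{|u|-1}},
\]
which is the first claimed bound.

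For the second claim I would run the same computation on the two equal expressions for $\Gamma_u^*$ furnished by Lemma~\ref{lem:gain_bound}. Taking the representation $\Gamma_u^* = \max_{\emptyset\neq v\subseteq u}\max_{\bsk\in E_2(v)}B(v,\bsk)$ and using the second item of Lemma~\ref{lem:B_for_E12} (which, crucially, gives the \emph{exact} value $\Gamma_{v,\bsk}=B(v,\bsk)=b^{m-|\bsk|}/(b-1)^{|v|-1}$, not merely an upper bound), the inner maximum over $\bsk\in E_2(v)$ becomes $b^{m-\min_{\bsk\in E_2(v)}|\bsk|}/(b-1)^{|v|-1}$. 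Here I need that $\min_{\bsk\in E_2(v)}|\bsk| = \min_{\bsk\in E_1(v)}|\bsk|$: the inclusion $E_2(v)\subseteq E_1(v)$ gives $\ge$, and a minimizer $\bsk$ of $|\bsk|$ over $E_1(v)$ automatically lies in $E_2(v)$, since minimality of $|\bsk|$ forces $C_{v,\bsk+\bsone_w}$ to be full row rank for every $w\subsetneq v$ (decreasing one coordinate strictly lowers $|\bsk|$, and $|\bsk+\bsone_w|\le|\bsk|+|v|-1<|\bsk+\bsone_v|$ corresponds to a strictly smaller witness). Hence the two optimization problems have the same value, and substituting the $t^*_v$ identity yields $\Gamma_u^* = \max_{\emptyset\neq v\subseteq u} b^{t^*_v+|v|-1}/(b-1)^{|v|-1}$. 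The final statement about $\Gamma$ is the special case $u=1{:}s$ together with $\Gamma=\Gamma_{1{:}s}^*$ from the definitions.

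The only real subtlety — and the step I would present most carefully — is verifying that the minimizer of $|\bsk|$ over $E_1(v)$ belongs to $E_2(v)$, i.e. that the two flavours of "extremal" index set agree at the minimum weight; the rest is bookkeeping with the exponents. A secondary point worth a sentence is that the hypothesis "$C_{u,\bsone}$ full row rank" is needed precisely so that $\bszero_v\notin E_1(v)$ for all relevant $v$ (equivalently, so that $t^*_v\le m$ and the formula in Lemma~\ref{lem:B_for_E12}(1), which is stated only for $\bsk\neq\bszero_v$, is always the one in play); without it some $\Gamma_v$ could be forced to $0$ by Remark~\ref{rem:full_rank} and the stated equalities would need trivial modification. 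I would close by noting that everything quoted from \cite{PO21} for base $2$ is recovered by setting $b=2$, where $(b-1)^{|u|-1}=1$.
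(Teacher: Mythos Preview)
Your overall strategy matches the paper's: plug Lemma~\ref{lem:B_for_E12} into Lemma~\ref{lem:gain_bound} and rewrite the result using the definition of $t^*_u$. The argument for $\Gamma_u$ is fine. However, your treatment of $\Gamma_u^*$ contains a genuine gap. The claim that a minimizer of $|\bsk|$ over $E_1(v)$ automatically lies in $E_2(v)$ is false, and so is the weaker assertion $\min_{\bsk\in E_2(v)}|\bsk|=\min_{\bsk\in E_1(v)}|\bsk|$. Your parenthetical justification silently assumes that $\bsk+\bsone_w$ can be rewritten as $\bsk''+\bsone_v$ for some $\bsk''\in\NN_0^{|v|}$ with $\bsk''\le\bsk$; this fails precisely when $k_j=0$ for some $j\in v\setminus w$.

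A concrete counterexample: take $v=\{1,2\}$, $m\ge 3$, and let the first two rows of $C_1$ be $e_1,e_1$ and those of $C_2$ be $e_2,e_3$ (so $C_{v,\bsone_v}$ has full row rank, as required). Then $\bsk=(1,0)\in E_1(v)$ since $C_{v,(2,1)}=(e_1;e_1;e_2)$ is rank-deficient while $C_{v,(1,1)}$ is not; and $|\bsk|=1$ is the minimum over $E_1(v)$ because $C_{v,(1,2)}=(e_1;e_2;e_3)$ has full rank, so $(0,1)\notin E_1(v)$. But $(1,0)\notin E_2(v)$, since for $w=\{1\}$ the matrix $C_{v,(2,0)}=(e_1;e_1)$ is rank-deficient; and neither $(0,1)$ nor $(0,0)$ lies in $E_2(v)$ either. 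Hence $\min_{E_1(v)}|\bsk|=1<\min_{E_2(v)}|\bsk|$, and your term-by-term identification of $\max_{\bsk\in E_2(v)}B(v,\bsk)$ with $b^{t^*_v+|v|-1}/(b-1)^{|v|-1}$ fails.

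The fix is painless and is what the paper does: bypass the $E_2$ detour entirely. Lemma~\ref{lem:gain_bound} already gives $\Gamma_u^*=\max_{\emptyset\neq v\subseteq u}\max_{\bsk\in E_1(v)}B(v,\bsk)$ as an \emph{equality}. Since $C_{u,\bsone_u}$ has full row rank, so does its row-submatrix $C_{v,\bsone_v}$ for every $\emptyset\neq v\subseteq u$, whence $\bszero_v\notin E_1(v)$ and item~1 of Lemma~\ref{lem:B_for_E12} applies to every term. Substituting and invoking the definition of $t^*_v$ gives the claimed formula directly. The equality between the $E_1$ and $E_2$ double maxima in Lemma~\ref{lem:gain_bound} holds only after the outer maximum over $v$, not term by term; indeed, the paper's own proof of that lemma passes from $\bsk\in E_1(v)$ to some $\bsk^*\in E_2(w\cup z)$ with $w\cup z$ possibly a \emph{proper} subset of $v$.
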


\begin{proof}
As $C_{u,\bsone_u}$ is assumed full row rank, it holds that $\bszero_u \notin E_2(u)$.
Hence, by applying Lemmas~\ref{lem:B_for_E12} to Lemma~\ref{lem:gain_bound} and considering the above definition of $t_u^*$, we have the desired results on $\Gamma_u$ and $\Gamma_u^*$, respectively. The result on $\Gamma$ is a special case of the result on $\Gamma_u^*$ with $u=1{:}s$.
\end{proof}

\begin{remark}\label{rem:maximal_gain_b2}
If $b=2$, we have
\[ \Gamma_u^*= \max_{\emptyset \neq v\subseteq u} 2^{t^*_v+|v|-1}=2^{t_u^*+|u|-1},\]
where the second equality follows from $t_v^*\leq t_u^*+|u|-|v|$, i.e., the maximum over $v\subseteq u$ is attained when $v=u$. Similarly, if $C_{1{:}s,\bsone}$ is full row rank, we have
\[ \Gamma=2^{t^*_{1{:}s}+s-1}.\]
This way we can recover the results shown in \cite[Section~4]{PO21}.
\end{remark}

\begin{example}\label{example}
To illustrate that the results in Theorem~\ref{thm:maximal_gain_bound} are hard to simplify further when $b$ is a prime power other than $2$, let us give an example. Consider a three-dimensional digital net in prime power base $b$ with generating matrices in $\Fb^{3\times 3}$:
\[
C_1 =
\begin{pmatrix}
1 & 0 & 0\\
0 & 1 & 1\\
0 & 0 & 0\\
\end{pmatrix},
C_2 =
\begin{pmatrix}
0 & 1 & 0\\
1 & 0 & 1\\
0 & 0 & 0\\
\end{pmatrix},
C_3 =
\begin{pmatrix}
0 & 0 & 1\\
1 & 1 & 0\\
0 & 0 & 0\\
\end{pmatrix}.
\]
Now let $u_1 = \{1,2,3\}$, $u_2 = \{1\}$,
$\bsk_1 = (1,0,0)$ and $\bsk_2 = (2)$.
Since $\bsk_1\in E_2(u_1)$ and $\bsk_2\in E_2(u_2)$, it follows from the second item of Lemma~\ref{lem:B_for_E12} that
\[
\Gamma_{u_1,\bsk_1} = \frac{b^2}{(b-1)^2} \quad \text{and}\quad 
\Gamma_{u_2,\bsk_2} = b,
\]
for any $b$.
In case of $b=2$, since $C_{1:3,\bsone}$ is full row rank, it follows from either \cite[Corollary~4]{PO21} or our Remark~\ref{rem:maximal_gain_b2} that
the maximum gain coefficient is
$\Gamma = 2^2$, which is attained by $\Gamma_{u_1,\bsk_1}$.
However, if $b \ge 3$, then we have $b^2/(b-1)^2 \le b$ and thus $\Gamma_{u_1,\bsk_1}$ does not coincide with the maximal gain coefficient $\Gamma$, which implies that we generally have
\[ \Gamma\neq \frac{b^{t^*_{1{:}s}+s-1}}{(b-1)^{s-1}},\]
unless $b=2$.

One trivial exceptional case is when $P$ is a digital $(0,m,s)$-net in base $b$ with $m\geq s$. As we have $t_u^*=0$ for any non-empty $u\subseteq 1{:}s$ and $C_{1{:}s,\bsone}$ is full row rank, it follows from Theorem~\ref{thm:maximal_gain_bound} that
\[ \Gamma = \max_{\emptyset \neq u\subseteq 1{:}s}\left(\frac{b}{b-1}\right)^{|u|-1}=\left(\frac{b}{b-1}\right)^{s-1},\]
where the maximum over $u$ is attained uniquely by $u=1{:}s$. This fact was implied already in the proof of \cite[Theorem~3]{Ow97}.
\end{example}

\section*{Acknowledgments}
The authors would like to thank Julian Hofstadler and Art Owen for their useful comments. The authors are also grateful to the reviewers for their helpful comments and suggestions.

\section*{Funding}
The work of T.G. was supported by JSPS KAKENHI Grant Number 20K03744.
The work of K.S. was supported by JSPS KAKENHI Grant Number 20K14326.

\bibliographystyle{plain}
\bibliography{ref.bib}

\end{document}